\newcommand{\CC}{{\mathbb C}}
\newcommand{\cB}{{\mathcal B}}
\newcommand{\cI}{{\mathcal I}}
\newcommand{\cL}{{\mathcal L}}
\newcommand{\cO}{{\mathcal O}}
\newcommand{\cU}{{\mathcal U}}
\newcommand{\cX}{{\mathcal X}}
\newcommand{\es}{\emptyset}
\newcommand{\FF}{{\mathbb F}}
\newcommand{\hra}{\hookrightarrow}
\newcommand{\la}{\langle}
\newcommand\lagr{\mathbb{LG}(\bigwedge^ 3 \CC^6)}
\newcommand{\lra}{\longrightarrow}
\newcommand{\n}{\noindent}
\newcommand{\ov}{\overline}
\newcommand{\PP}{{\mathbb P}}
\newcommand{\ra}{\rangle}
\newcommand{\wt}{\widetilde}
\newcommand{\Gr}{\mathrm{Gr}}
\theoremstyle{plain}
\newtheorem{thm}{Theorem}[section]
\newtheorem{clm}[thm]{Claim}
\newtheorem{crl}[thm]{Corollary}
\newtheorem{lmm}[thm]{Lemma}
\newtheorem{prp}[thm]{Proposition}
\newtheorem{prp-dfn}[thm]{Proposition-Definition}
\theoremstyle{definition}
\newtheorem{dfn}[thm]{Definition}
\theoremstyle{remark}
\newtheorem{rmk}[thm]{Remark}
\DeclareMathOperator{\im}{Im}
\DeclareMathOperator{\mult}{mult}
\DeclareMathOperator{\sing}{sing}
\DeclareMathOperator{\Sym}{S}
\DeclareMathOperator{\vol}{vol}
\newcommand{\cit}[1]{{\rm \textbf{#1}}}
\newcommand{\Ref}[2]{\cit{%
\ifthenelse{\equal{#1}{thm}}{Theorem}{}%
\ifthenelse{\equal{#1}{ass}}{Assumption}{}%
%\ifthenelse{\equal{#1}{asswn}}{$W_n$-Assumption}{}%
%\ifthenelse{\equal{#1}{asswnplus}}{$W^{+}_n$-Assumption}{}%
\ifthenelse{\equal{#1}{chp}}{Chapter}{}%
\ifthenelse{\equal{#1}{prp}}{Proposition}{}%
\ifthenelse{\equal{#1}{lmm}}{Lemma}{}%
\ifthenelse{\equal{#1}{cnj}}{Conjecture}{}%
\ifthenelse{\equal{#1}{crl}}{Corollary}{}%
\ifthenelse{\equal{#1}{dfn}}{Definition}{}%
\ifthenelse{\equal{#1}{expl}}{Example}{}%
\ifthenelse{\equal{#1}{hyp}}{Hypothesis}{}%
\ifthenelse{\equal{#1}{rmk}}{Remark}{}%
\ifthenelse{\equal{#1}{clm}}{Claim}{}%
\ifthenelse{\equal{#1}{exe}}{Exercise}{}%
\ifthenelse{\equal{#1}{sec}}{Section}{}%
\ifthenelse{\equal{#1}{subsec}}{Subsection}{}%
\ifthenelse{\equal{#1}{univ}}{Universal Property}{}%
\ifthenelse{\equal{#1}{trm}}{Terminology}{}%
\ifthenelse{\equal{#1}{tbl}}{Table}{}%
\  \ref{#1:#2}%
}}
\begin{document}
 \title{Pairwise incident planes and Hyperk\"ahler four-folds}
 \author{Kieran G. O'Grady\\\\
\lq\lq Sapienza\rq\rq Universit\`a di Roma}
\date{April 12 2012}
\thanks{Supported by
 PRIN 2010}
 \dedicatory{Dedicato a Joe in occasione del $60^0$ compleanno}
  \maketitle
 \tableofcontents
 \section{Introduction}\label{sec:prologo}
 \setcounter{equation}{0}
A family of pairwise incident lines in a projective space consists of lines through a point or lines contained in a plane. Is there an analogous characterization of families of pairwise incident planes in a complex projective space ?  A beautiful  theorem of Ugo Morin~\cite{morin} states that an algebraic {\bf irreducible} family of pairwise incident planes is contained in one of the following families:
\begin{enumerate}
\item[(1)]
Planes containing a fixed point.
\item[(2)]
Planes whose intersection with a fixed plane has dimension at least $1$.
\item[(3)]
Planes contained in a fixed $4$-dimensional projective space.
\item[(4)]
One of the two irreducible components of the set of planes contained in a fixed smooth $4$-dimensional quadric.
\item[(5)]
The planes tangent to a fixed Veronese surface (image of $\PP^2\to |\cI_{\PP^2}(2)|^{\vee}$).
\item[(6)]
The planes intersecting a fixed Veronese surface along a conic.
\end{enumerate}
In the present paper we will address the following question: what are the  cardinalities
 of finite families of pairwise incident planes ? As stated the question is not interesting because the families  of pairwise incident planes listed above contain sets of arbitrary finite cardinality.
 In order to formulate a meaningful question we recall the following definition of Morin: a family of pairwise incident planes is   {\it complete}  if  there exists no plane outside the family which is incident to all planes in the family - in other words if the family is maximal. We ask the following question:  what are  the cardinalities of  finite complete family of pairwise incident planes ?  Before stating our main result we will describe a finite complete family of pairwise incident planes in $\PP^6$. Let $\{v_0,\ldots,v_6\}$ be a basis of $\CC^7$. Let $\Lambda_1,\ldots,\Lambda_7\in\Gr(2,\PP^7)$ be defined by
\begin{equation}\label{primiquattro}
\scriptstyle
\Lambda_1=\PP\la v_0,v_1,v_2\ra,\quad 
\Lambda_2=\PP\la v_2,v_3,v_4\ra,\quad 
\Lambda_3=\PP\la v_0,v_4,v_5\ra,\quad  
\Lambda_4=\PP\la v_1,  v_3, v_5, \ra,
\end{equation}
\begin{equation}\label{altritre}
\scriptstyle
\Lambda_5=\PP\la v_0,v_3,v_6\ra,\quad  
\Lambda_6=\PP\la v_1,v_4,v_6\ra,\quad 
\Lambda_7=\PP\la v_2,v_5,v_6\ra. 
\end{equation}
As is easily checked the planes $\Lambda_1,\ldots,\Lambda_7$ are pairwise incident: we will show (see~\Ref{clm}{primacompl}) that they form a complete family. 
\begin{rmk}
We identify the set $\{[v_0],\ldots,[v_6]\}$ and  $\PP^2_{\FF_2}$ (the projective plane on the field with $2$ elements) as follows:
\begin{equation*}
\scriptstyle
[v_0]\mapsto [010],\ \ [v_1]\mapsto [011],\ \ [v_2]\mapsto [001],\ \ [v_3]\mapsto [101],\ \ [v_4]\mapsto [100],
\ \ [v_5]\mapsto [110],\ \ [v_6]\mapsto [111]. 
\end{equation*}
Given the above identification a plane in $\PP^6$ is equal to one of 
 the $\Lambda_i$'s if and only if it  is   spanned by the points of a line in  $\PP^2_{\FF_2}$.
\end{rmk}
\begin{thm}\label{thm:alpiuventi}
Let $T\subset\Gr(2,\PP^N)$ be a  finite complete family of pairwise incident planes. The planes in $T$ span a projective space of dimension $5$ or $6$.  If the span has dimension $6$ then $T$ is projectively equivalent to the family $\{\Lambda_1,\ldots,\Lambda_7\}$ described above.   If the span has dimension $5$ then $T$ has at most $20$ elements. For any  $10\le k\le 16$ there  exists a  complete family of $k$ pairwise incident planes: in fact  it has at least $(20-k)$ moduli.  
\end{thm}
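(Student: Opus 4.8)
The plan is to translate the whole problem into exterior algebra. Writing $\PP^N=\PP(V)$ with $V=\CC^{N+1}$, a plane is $\PP(W)$ for a three–dimensional $W\subset V$, with Plücker point $\bigwedge^3 W\in\bigwedge^3 V$, and two planes $\PP(W),\PP(W')$ are incident iff $\bigwedge^3 W\wedge\bigwedge^3 W'=0$ in $\bigwedge^6 V$. Hence a family of pairwise incident planes is the same datum as a set of decomposable vectors whose span $A\subseteq\bigwedge^3 V$ is \emph{null} for the wedge pairing $\bigwedge^3 V\times\bigwedge^3 V\to\bigwedge^6 V$: by bilinearity, orthogonality of the spanning decomposables gives $A\wedge A=0$. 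A plane $\PP(W)$ is incident to every plane of $T$ iff $\bigwedge^3 W\in A^{\perp}:=\{\xi:\xi\wedge A=0\}$, so \emph{completeness} says exactly that the decomposable vectors of $A^{\perp}$ are precisely the Plücker points of $T$ (note $A\subseteq A^{\perp}$), while \emph{finiteness} says that $\Gr(3,V)\cap\PP(A^{\perp})$ is $0$–dimensional; finally $T$ spans $\PP^N$ iff the $W_i$ sum to $V$. With this dictionary the lower bound is immediate: if the span had dimension $\le 4$ then inside that $\PP^{\le 4}$ any two planes meet, so every plane of the span is incident to all of $T$, and completeness would force $T$ to contain the entire positive–dimensional family of planes in its span, contradicting finiteness.

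For the upper bound the engine is the projective dimension theorem in $\PP(\bigwedge^3 V)$: since $T=\Gr(3,V)\cap\PP(A^{\perp})$ is nonempty and finite, we must have $\dim\Gr(3,V)+\dim\PP(A^{\perp})\le\dim\PP(\bigwedge^3 V)$, i.e.\ $\dim A^{\perp}\le\binom{N+1}{3}-3(N-2)$, which combined with $A\subseteq A^{\perp}$ forces $A$ to be large. The main obstacle is to upgrade this to the sharp bound $N\le 6$. I would feed the structure of a null subspace spanned by decomposables, together with the spanning condition $\sum_i W_i=V$, into Morin's classification: among the six families only (1) and (2) are unbounded in span, so the task is to show that a finite complete family spanning $\PP^{\ge 7}$ would manufacture, among the planes incident to a suitable subfamily, a positive–dimensional Morin family that completeness together with finiteness cannot accommodate. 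Excluding $N\ge 7$ is the crux, and the place where I expect the real work to lie.

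Once the span is $5$ or $6$ the cases split. For span $5$ we are in $V=\CC^6$, the wedge pairing is the symplectic form on $\bigwedge^3\CC^6$, ``null'' means ``isotropic'', and the projective dimension theorem in $\PP^{19}$ now reads $\dim A\ge 9$, while isotropy gives $\dim A\le 10$; so $A$ is isotropic of dimension $9$ or $10$ with $A\subseteq A^{\perp}$. The bound $\lvert T\rvert\le 20$ should then come from computing the length of $\Gr(3,6)\cap\PP(A^{\perp})$: this is an \emph{excess} intersection (the expected dimension is negative, and the naive transverse count $\deg\Gr(3,6)=42$ is not attained), so I would exploit the self–orthogonality $A\subseteq A^{\perp}$ to regularize it and extract the universal bound $20$. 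For span $6$ we are in $V=\CC^7$ with the $(\CC^7)^{\vee}$–valued skew pairing $\bigwedge^3\CC^7\times\bigwedge^3\CC^7\to\bigwedge^6\CC^7$; here I would show that completeness forces $\lvert T\rvert=7$, recover seven ``base points'' as the concurrences of triples of planes, and prove that the incidence pattern of the $W_i$ is that of the lines of $\PP^2_{\FF_2}$ on those seven points, which by the Remark pins $T$ down to $\{\Lambda_1,\dots,\Lambda_7\}$ up to $PGL_7$. The rigidity — no other configuration, and no eighth plane — is the whole content of this case.

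Finally, for existence and moduli in the span–$5$ case I would run the symplectic construction backwards. The Lagrangian Grassmannian $\lagr$ has dimension $55$ and $\Aut(\PP^5)=PGL_6$ has dimension $35$, so Lagrangians carry a $20$–dimensional moduli; cutting out those $A$ whose $\Gr(3,6)\cap\PP(A)$ is a complete family of exactly $k$ pairwise incident planes imposes at most $k$ conditions, leaving a family of dimension at least $20-k$. Producing explicit $A$ realizing each value $10\le k\le 16$ — for instance by starting from a highly degenerate, combinatorially described configuration and deforming it inside $\lagr$ — then yields simultaneously the existence statement and the stated lower bound $20-k$ on the number of moduli.
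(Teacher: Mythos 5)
Your dictionary between families of pairwise incident planes and null subspaces of $\bigwedge^3 V$ spanned by decomposables is exactly the paper's starting point (Remark~\ref{rmk:incidiso} and Claim~\ref{clm:incilagr}), and your argument that the span has dimension at least $5$ is the paper's. But every one of the four substantive claims of the theorem is left as a strategy sketch rather than a proof, and in each case the sketched strategy is not the one that actually works. (i) Excluding span $\ge 7$ and the rigidity of the span-$6$ case: you yourself flag this as ``the crux'' and propose to route it through Morin's classification; the paper instead proves it by a short synthetic argument --- first that any two planes of a finite complete family meet in exactly one point (Proposition~\ref{prp:unpunto}, since a common line would let one adjoin an infinite pencil), then an explicit build-up of $\Lambda_1,\dots,\Lambda_4$ spanning a $\PP^5$ and a count showing the planes not contained in that $\PP^5$ biject with the three lines $L_5,L_6,L_7$, forcing $T=\{\Lambda_1,\dots,\Lambda_7\}$. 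Nothing in your proposal produces the positive-dimensional Morin family you would need, and it is not clear the Morin route closes. (ii) The bound $20$: ``regularizing the excess intersection'' $\Gr(3,6)\cap\PP(A^{\perp})$ is not an argument, and no general excess-intersection principle yields $20$ (the naive degree is $42$, and nothing in self-orthogonality alone caps the length at $20$). The paper's mechanism is entirely different: for $W\in\Theta_A$ one has the sextic degeneracy curve $C_{W,A}\subset\PP(W)$, every other plane of the family forces a singular point of $C_{W,A}$ at the intersection point, at most $4$ planes can pass through one point and three or four of them force multiplicity $\ge 3$ (Proposition~\ref{prp:cardmult}); one then needs Lemma~\ref{lmm:unridotto} (some $C_{W,A}$ is reduced, via $\deg\Gr(2,\PP^5)=42$) and Pl\"ucker/genus bounds for reduced plane sextics to conclude.

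(iii) Existence for $10\le k\le 16$ and (iv) the $(20-k)$ moduli: your dimension count in $\lagr$ ($55-35=20$, minus ``at most $k$ conditions'') establishes neither that the relevant locus is nonempty, nor that a generic point of it has no extra planes in $\Theta_A$, nor why the range is $10\le k\le 16$ rather than, say, $1\le k\le 20$. The paper obtains all of this from Ferretti's degeneration: the Lagrangian $A_{+}(U)$ with $Y_{A_{+}(U)}=3Q(U)$, a Kummer quartic with a chosen subset of $k$ of its $16$ nodes (the condition that no quadric contains the chosen nodes is what forces $k\ge 10$, since the $k$ Pl\"ucker points must span a Lagrangian), the flop of the $k$ planes in $\wt{S}_0^{[2]}$, and Proposition~\ref{prp:degequar}, whose item (4c) rules out extra planes via the smoothness statement of Proposition~\ref{prp:neve} and whose item (5) (the period map is an immersion) is what delivers the $(20-k)$ moduli. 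None of this is recoverable from the parameter count you give. In short: the framework is right, but the proof of each nontrivial assertion is missing.
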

In~\Ref{sec}{dimambiente} we will study finite complete families of pairwise incident planes which span a projective space of dimension greater than $5$: the proofs are of an elementary  nature. In~\Ref{sec}{pianilagr} we will make the connection between our question and the geometry of certain Hyperk\"ahler $4$-folds which are double covers of special sextic hypersurfaces in $\PP^5$ named EPW-sextics. Then we will 
apply results of Ferretti~\cite{ferretti} on degenerations of double EPW-sextics  in order to show that there exist  finite complete families of pairwise incident planes in $\PP^5$ of cardinality between $10$ and $16$, we will also get the lower bound on the number of moduli given in~\Ref{thm}{alpiuventi}.  In~\Ref{sec}{stimasup} we will prove that  a  finite complete family of pairwise incident planes has cardinality at most $20$. 

A few comments. I suspect that $16$ is the  maximum cardinality of a finite complete family of pairwise incident planes.  Our (we might say Ferretti's) proof that there exist complete families of pairwise incident planes of cardinality between $10$ and $16$  is a purely existential proof: it does not give explicit families.   One may ask for explicit examples. The paper~\cite{dolgamark} of  Dolgachev and Markushevich provides a general framework for the study of this problem. In particular the authors
 associate to a generic Fano model of an Enriques surface (plus a suitable choice of $10$ elliptic curves on  the surface) a finite collection of complete families of $10$ pairwise incident planes in $\PP^5$ - they also study the problem of classifying the irreducible components (there are several such) of the locus parametrizing ordered $10$-tuples of pairwise incident planes in $\PP^5$.
 In the same paper Dolgachev and Markushevich have given  explicit constructions of    complete families of  $13$ pairwise incident planes.

\vskip 2mm
\n
{\bf Notation and conventions.}
We work throughout over $\CC$. 
Let $T\subset\Gr(2,\PP^N)$ be a  family of  planes: the {\it span} of $T$ is the span of the union of the planes parametrized by $T$. 
 \section{Families of pairwise incident planes in $\PP^N$ for $N>5$}\label{sec:dimambiente}
 \setcounter{equation}{0}
Let $T\subset\Gr(2,\PP^N)$ be a  finite complete family of pairwise incident planes. If  the span of $T$  is contained in a projective space $M$ of dimension at most $4$ then $T$ is contained in the infinite family of pairwise incident planes $\Gr(2,M)$, that is a contradiction. Hence the span of $T$ has dimension at least $5$.
In the present section we will classify finite complete family of pairwise incident planes whose span has dimension greater than $5$. 
 We will start  by showing that the  planes $\Lambda_1,\ldots,\Lambda_7\subset\PP^7$  defined by~\eqref{primiquattro}, \eqref{altritre} form a complete family of pairwise incident planes. 
 Let $v_0,\ldots,v_6$ be as in~\Ref{sec}{prologo}; we let
\begin{equation}\label{pokemon}
\PP^5:=\PP\la v_0,\ldots,v_5\ra.
\end{equation}
The set of lines in $\PP^5$ meeting $\Lambda_1,\Lambda_2,\Lambda_3$ has $4$ irreducible components, each isomorphic to $\PP^2$; more precisely 
\begin{multline}\label{intertre}
\scriptstyle
\{L\in\Gr(1,\PP^5) \mid L\cap \Lambda_i\not=\es,\ i=1,2,3\}= 
\Gr(1,\PP\la v_0,v_2,v_4\ra) \cup \\
\scriptstyle
\cup \{\PP\la v_0,u\ra \mid 0\not=u\in \la v_2,v_3,v_4\ra\}\cup
\{\PP\la v_2,u\ra \mid 0\not=u\in \la v_0,v_4,v_5\ra\} 
\cup \{\PP\la v_4,u\ra \mid 0\not=u\in \la v_0,v_1,v_2\ra\}.
\end{multline}
From the above equality one  gets that there are exactly $3$ lines in $\PP^5$  meeting $\Lambda_1,\ldots,\Lambda_4$. More precisely let 
\begin{equation}\label{unduetre}
\scriptstyle
L_5:=\PP\la v_0,v_3\ra=\Lambda_5\cap\PP^5,\quad L_6:=\PP\la v_1,v_4\ra=\Lambda_6\cap\PP^5,\quad  
L_7:=\PP\la v_2,v_5\ra=\Lambda_7\cap\PP^5.
\end{equation}
Then
\begin{equation}\label{trerette}
\{L\in\Gr(1,\PP^5) \mid L\cap \Lambda_i\not=\es,\, i=1,2,3,4\}= 
\{L_5, L_6, L_7 \}.
\end{equation}
\begin{clm}\label{clm:primacompl}
The collection of planes $\Lambda_1,\ldots,\Lambda_7\subset\PP^6$  defined by~\eqref{primiquattro}, \eqref{altritre} is a complete family of pairwise incident planes. 
\end{clm}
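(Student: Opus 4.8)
The plan is to first record pairwise incidence and then prove completeness by slicing an arbitrary incident plane with the hyperplane $\PP^5=\PP\la v_0,\dots,v_5\ra$ of~\eqref{pokemon}. Pairwise incidence is immediate from the Fano picture of the Remark: two distinct lines of $\PP^2_{\FF_2}$ always share a point, so any two of the $\Lambda_i$ share one of the $[v_k]$ and hence meet. For completeness, let $\Lambda\subset\PP^6$ be a plane meeting all seven $\Lambda_i$; I must show $\Lambda\in\{\Lambda_1,\dots,\Lambda_7\}$. Since $\Lambda_1,\dots,\Lambda_4\subset\PP^5$, every point of $\Lambda\cap\Lambda_i$ with $i\le 4$ lies in $\Lambda\cap\PP^5$. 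I therefore split into the cases $\Lambda\not\subset\PP^5$ and $\Lambda\subset\PP^5$.

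\emph{Case $\Lambda\not\subset\PP^5$.} Here $L:=\Lambda\cap\PP^5$ is a line, and by the remark just made $L$ meets $\Lambda_1,\dots,\Lambda_4$, so $L\in\{L_5,L_6,L_7\}$ by~\eqref{trerette}. Say $L=L_5=\PP\la v_0,v_3\ra$; then $\Lambda=\PP\la v_0,v_3,w\ra$ with $w=v_6+a_1v_1+a_2v_2+a_4v_4+a_5v_5$ after normalizing $w$ modulo $\la v_0,v_3\ra$. Computing $\la v_0,v_3,w\ra\cap\la v_1,v_4,v_6\ra$ and $\la v_0,v_3,w\ra\cap\la v_2,v_5,v_6\ra$ directly, incidence with $\Lambda_6$ forces $a_2=a_5=0$ and incidence with $\Lambda_7$ forces $a_1=a_4=0$; hence $w=v_6$ and $\Lambda=\Lambda_5$. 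The subcases $L=L_6,L_7$ are identical up to the symmetry of the configuration and yield $\Lambda_6,\Lambda_7$.

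\emph{Case $\Lambda\subset\PP^5$.} Now incidence with $\Lambda_5,\Lambda_6,\Lambda_7$ amounts to $\Lambda$ meeting the three lines $L_5,L_6,L_7$ of~\eqref{unduetre}, which use pairwise disjoint blocks of basis vectors and are therefore pairwise skew. First I rule out $L_i\subset\Lambda$: if $L_5\subset\Lambda$ with third generator $q$, the same kind of intersection computation shows that meeting $L_6$ forces two coordinates of $q$ to vanish and meeting $L_7$ forces the other two, so $q=0$, a contradiction. Hence $\Lambda$ meets each $L_i$ in a single point $p_i$, and because the $L_i$ occupy disjoint coordinate blocks the three points are never collinear, so $\Lambda=\la p_5,p_6,p_7\ra$ with $p_5=s_5v_0+t_5v_3$, $p_6=s_6v_1+t_6v_4$, $p_7=s_7v_2+t_7v_5$. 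Expanding a general vector of $\Lambda$ in the basis $v_0,\dots,v_5$, incidence with $\Lambda_1,\dots,\Lambda_4$ becomes four disjunctive conditions on the vanishing of the $s_i,t_i$ (for example $\Lambda\cap\Lambda_1\ne\es$ iff $t_5=0$ or $t_6=0$ or $t_7=0$). Since $(s_i,t_i)\ne 0$ allows at most one zero per line, there are at most three zeros in all; a short hitting-set count shows the four conditions cannot be satisfied with fewer than three zeros, so exactly one coordinate vanishes on each line. Checking the four admissible patterns recovers precisely $\Lambda_1,\Lambda_2,\Lambda_3,\Lambda_4$.

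Combining the two cases, every plane incident to all seven $\Lambda_i$ is one of them, which is completeness. I expect the main obstacle to be the case $\Lambda\subset\PP^5$: one must first eliminate the degenerate configurations (a plane containing one of the skew lines, or the three intersection points being collinear) and then organize the four incidence conditions into a clean finite count. The skewness forced by the disjoint coordinate blocks of $L_5,L_6,L_7$ is exactly what makes both the non-degeneracy and the combinatorial count go through.
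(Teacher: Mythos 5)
Your proof is correct and follows essentially the same route as the paper's: split on whether $\Lambda$ lies in the $\PP^5$ spanned by $\Lambda_1,\dots,\Lambda_4$, use~\eqref{trerette} to pin down $\Lambda\cap\PP^5$ in the transverse case, and reduce the contained case to a plane spanned by one point on each of $L_5,L_6,L_7$ subject to the four incidence conditions. You merely carry out explicitly (via coordinates and the hitting-set count) the two steps the paper leaves as computations, so there is nothing to add.
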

\begin{proof}
We need to show that the family is  complete. First we notice that the span of $\Lambda_1,\ldots,\Lambda_4$ is equal to  $\PP^5$, notation as in~\eqref{pokemon}.
Now let $\Lambda\subset \PP^6$ be a plane intersecting $\Lambda_1,\ldots,\Lambda_7$. Since the intersection of $\Lambda_1,\ldots,\Lambda_4$ is empty one of the following holds:
\begin{enumerate}
\item[(1)]
$\Lambda\subset \PP^5$,
\item[(2)]
$\dim(\Lambda\cap \PP^5)=1$.
\end{enumerate}
Suppose that~(1) holds. Then $\Lambda$ meets each of the lines $L_5,L_6,L_7$ given by~\eqref{unduetre}.  Since $L_5,L_6,L_7$  generate $\PP^5$ it follows that $\Lambda$ intersects $L_i$  in a single point $p_i$ and that $\Lambda$ is spanned by $p_5,p_6,p_7$. Imposing the condition that $\la p_5,p_6,p_7\ra$ (for $p_i\in L_i$) meet each of $\Lambda_1,\ldots,\Lambda_4$ we get that  $\la p_5,p_6,p_7\ra$ is one of  $\Lambda_1,\ldots,\Lambda_4$. This proves that if~(1) holds then $\Lambda\in\{\Lambda_1,\ldots,\Lambda_4\}$. Next suppose that~(2) holds and let $L= \Lambda\cap \PP^5$. Then $L$ meets each of $\Lambda_1,\ldots,\Lambda_4$. By~\eqref{trerette} it follows that $L$ equals one of $L_5,L_6,L_7$. Suppose that  $L=L_5$. Then $\Lambda$ meets $\Lambda_6$ and $\Lambda_7$ in points outside $\PP^5$. Now notice that the span of  $\Lambda,\Lambda_6,\Lambda_7$ is all of $\PP^6$: it follows that $\Lambda,\Lambda_6,\Lambda_7$ meet in a single point, which is necessarily  $[v_6]$. Thus $\Lambda=\Lambda_5$. If $L$ equals one of $L_6$ or $L_7$ a similar argument shows that  $\Lambda=\Lambda_6$ or  $\Lambda=\Lambda_7$ respectively.
\end{proof}
Our next goal is to  prove that if $T$ is  a finite complete family of pairwise incident planes spanning a projective space of dimension greater than $5$ then  $T$ is projectively equivalent to  $\{\Lambda_1,\ldots,\Lambda_7\}$ where 
the  planes $\Lambda_1,\ldots,\Lambda_7$  are defined by~\eqref{primiquattro}, \eqref{altritre}. First we make the following observation.
\begin{prp}\label{prp:unpunto}
Let $T\subset\Gr(2,\PP^N)$ be a family of pairwise incident planes. Suppose that there exist $\Lambda,\Lambda'\in T$ such   that their intersection is a line. Then $T$ is contained in an infinite family of pairwise incident planes.
\end{prp}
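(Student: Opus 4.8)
The plan is to produce, by hand, an infinite family of pairwise incident planes that visibly contains $T$; the natural candidate is $T$ together with a pencil built from $\Lambda$ and $\Lambda'$. Write $L:=\Lambda\cap\Lambda'$ and $P:=\la\Lambda,\Lambda'\ra$. Since $\dim L=1$, the dimension formula gives $\dim P=2+2-1=3$, so $P$ is a $\PP^3$. Inside $P$ consider the pencil $\cP:=\{\Pi\in\Gr(2,P)\mid L\subset\Pi\}$ of planes containing $L$. This pencil is a $\PP^1$, it contains both $\Lambda$ and $\Lambda'$, and its members are pairwise incident because they all contain $L$. I will show that $T\cup\cP$ is an (infinite) family of pairwise incident planes, which is exactly what is required.

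The key point is the following claim: every plane $\Sigma$ meeting both $\Lambda$ and $\Lambda'$ meets every member of $\cP$. Granting this, the three kinds of incidence present in $T\cup\cP$ are all accounted for: the planes of $T$ are pairwise incident by hypothesis; the planes of $\cP$ are pairwise incident since they share $L$; and each $\Sigma\in T$ meets each $\Pi\in\cP$ by the claim, applied to $\Lambda,\Lambda'\in T$ (which $\Sigma$ does meet, $T$ being pairwise incident). As $\cP$ is infinite, so is $T\cup\cP$, and $T\subset T\cup\cP$.

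It remains to prove the claim, and this is where the hypothesis $\dim(\Lambda\cap\Lambda')=1$ does all the work. Set $W:=\Sigma\cap P$. Since $\Lambda\subset P$ and $\Sigma\cap\Lambda\not=\es$, the space $W$ is nonempty, and for any $\Pi\in\cP$ one has $\Sigma\cap\Pi=W\cap\Pi$ because $\Pi\subset P$. If $\dim W\ge 1$, then inside the three-dimensional $P$ one has $\dim(W\cap\Pi)\ge\dim W+2-3\ge 0$, so $W\cap\Pi\not=\es$. The one delicate case is $\dim W=0$, say $W=\{q\}$: then $\Sigma\cap\Lambda\subset W$ and $\Sigma\cap\Lambda'\subset W$ force $q\in\Lambda\cap\Lambda'=L\subset\Pi$, so $W\cap\Pi=\{q\}\not=\es$ again. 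The main obstacle, modest as it is, is exactly this step of promoting incidence with the two planes $\Lambda,\Lambda'$ to incidence with the whole pencil; both the dimension count and the degenerate point-case hinge on $P$ being a $\PP^3$, i.e.\ on $\Lambda\cap\Lambda'$ being a line rather than a point.
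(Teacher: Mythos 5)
Your proof is correct and is essentially the paper's own argument: both take the pencil of planes in $\la\Lambda,\Lambda'\ra\cong\PP^3$ containing $L=\Lambda\cap\Lambda'$ and show each such plane meets every member of $T$ by splitting into the cases $\dim(\Sigma\cap\la\Lambda,\Lambda'\ra)\ge 1$ (dimension count in $\PP^3$) and $\dim(\Sigma\cap\la\Lambda,\Lambda'\ra)=0$ (the intersection point must lie on $L$). No issues.
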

\begin{proof}
Let $L:=\Lambda\cap\Lambda'$ and $M:=\la \Lambda,\Lambda'\ra$. Thus $L$ is a line and $M$ is a $3$-dimensional projective space. Let $\Lambda''\in T$: since $\Lambda''$ intersects both $\Lambda$ and $\Lambda'$ one of the following holds:
\begin{enumerate}
\item[(1)]
$\dim(\Lambda''\cap M)\ge 1$,
\item[(2)]
$\Lambda''\cap L\not=\es$.
\end{enumerate}
Now let $\Lambda_0\subset M$ be a plane containing $L$. If~(1) holds then $\Lambda_0$ intersects $(\Lambda''\cap M)$, if~(2) holds then $\Lambda_0$ contains the non-empty intersection $(\Lambda''\cap L)$: in both cases we get that  $\Lambda_0$  intersects $\Lambda''$.  Hence the union of $T$ and the set of  planes in $M$ containing $L$  is an infinite family of  pairwise incident  planes containing $T$. 
\end{proof}
The result below follows immediately from~\Ref{prp}{unpunto}.
\begin{crl}\label{crl:unpunto}
Let $T\subset\Gr(2,\PP^N)$ be a finite complete family of pairwise incident planes. If $\Lambda,\Lambda'\in T$ are distinct their intersection is a single point.
\end{crl}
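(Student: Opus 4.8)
The plan is to argue by contradiction, feeding directly into \Ref{prp}{unpunto}. First I would record the elementary trichotomy for the intersection of two distinct planes $\Lambda,\Lambda'\subset\PP^N$: since neither is contained in the other, $\dim(\Lambda\cap\Lambda')\le 1$, and since $T$ is a family of \emph{pairwise incident} planes the intersection is non-empty. Hence $\Lambda\cap\Lambda'$ is either a single point or a line, and it suffices to rule out the latter possibility.

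So suppose, for contradiction, that $\Lambda\cap\Lambda'$ is a line $L$. By \Ref{prp}{unpunto} the family $T$ is then contained in an infinite family $T'$ of pairwise incident planes. Inspecting the proof of that proposition, I may take $T'$ to be the union of $T$ with the pencil of planes in $M=\la\Lambda,\Lambda'\ra$ that contain $L$; since $L$ is a line in the $3$-dimensional space $M$, this pencil is infinite. Now I play finiteness against completeness: as $T$ is finite while $T'$ is infinite, I can choose a plane $\Lambda_0\in T'\setminus T$. By the construction of $T'$ this $\Lambda_0$ is incident to every plane of $T'$, in particular to every plane of $T$, while $\Lambda_0\notin T$. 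This contradicts the completeness (maximality) of $T$, and the contradiction excludes the case $\dim(\Lambda\cap\Lambda')=1$, leaving only a single point.

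There is no substantial obstacle here once \Ref{prp}{unpunto} is in hand. The one point worth watching is that the contradiction genuinely requires a plane lying \emph{strictly outside} $T$, and this is precisely where the finiteness hypothesis enters: the infinitude of the pencil of planes through $L$ in $M$ is what guarantees $T'\setminus T\ne\es$, producing the witness to the failure of completeness.
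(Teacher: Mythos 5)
Your argument is correct and is exactly the one the paper intends: the corollary is stated there as following immediately from \Ref{prp}{unpunto}, and your write-up just makes explicit the trichotomy for the intersection and the finiteness-versus-completeness contradiction. Nothing further is needed.
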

\begin{prp}
Let $T\subset\Gr(2,\PP^N)$ be a finite complete family of pairwise incident planes. Suppose that the span of $T$ has dimension greater than $5$. Then $T$ is projectively equivalent to $\{\Lambda_1,\ldots,\Lambda_7\}$ where $\Lambda_1,\ldots,\Lambda_7$ are as in~\eqref{primiquattro} and~\eqref{altritre}. 
\end{prp}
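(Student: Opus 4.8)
The plan is to build the configuration from the inside out, using \Ref{crl}{unpunto} (distinct members of $T$ meet in a single point) as the basic rigidity input, and to recognize the Fano structure only at the very end.

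First I would record a trichotomy for any three distinct planes $A,B,C\in T$. By \Ref{crl}{unpunto} they meet pairwise in single points $p_{AB},p_{AC},p_{BC}$. If two of these coincide, say $p_{AB}=p_{AC}$, then this point lies on $B$ and on $C$, hence equals $p_{BC}$, and all three planes pass through it; if the three points are distinct they cannot be collinear, since the line joining two of them would be contained in two of the three planes, contradicting \Ref{crl}{unpunto}. Thus either $A,B,C$ share a point, or $p_{AB},p_{AC},p_{BC}$ are independent and a dimension count gives $\dim\la A,B,C\ra=5$. Next I would produce a \emph{triangle}, i.e.\ a triple with no common point. If every plane of $T$ passed through one fixed point, $T$ would sit inside the infinite family of planes through that point (Morin's case~(1)), contradicting completeness and finiteness; so there is no global common point. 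If moreover every triple had a common point, I would take a triple with common point $p$ and a plane $A_4\not\ni p$ (which must exist), and observe that the common point of $\{A_1,A_2,A_4\}$ lies on $A_1\cap A_2=\{p\}$, forcing $p\in A_4$, a contradiction. Hence some triple $A_1,A_2,A_3$ is a triangle; set $H:=\la A_1,A_2,A_3\ra\cong\PP^5$.

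Because the span of $T$ exceeds $5$, I can fix $C_1\in T$ with $C_1\not\subset H$. Since $C_1$ meets $A_1,A_2,A_3$ and these have no common point, the three intersection points are not all equal, so $C_1\cap H$ is a line $m_1$ meeting each $A_i$; writing $C_1=\la m_1,P\ra$ with $P\notin H$ gives $W:=\la H,P\ra\cong\PP^6$. More generally every $B\in T$ either lies in $H$ or meets $H$ in a line through which it acquires a single point $Q_B$ off $H$. The key dimension lemma I would prove here is: if $B\cap C_1$ contains a point $x\notin H$, then $\la H,x\ra=\la H,P\ra$ and $\la H,x\ra=\la H,Q_B\ra$, so $B\subset W$ and $Q_B\in W$. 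Thus the only way a plane can escape $W$ is by meeting $C_1$ \emph{inside} $H$, and excluding this is what must be done to force $\dim\la T\ra=6$.

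The heart of the proof is to show that no plane escapes $W$ and that the members of $T$ are forced to be the seven Fano planes. After normalizing $A_1,A_2,A_3$ to $\Lambda_1,\Lambda_2,\Lambda_3$ of~\eqref{primiquattro} and $P$ to $[v_6]$, I would analyze planes relative to $H=\PP\la v_0,\ldots,v_5\ra$ exactly as in the proof of \Ref{clm}{primacompl}: the lines of $H$ meeting $A_1,A_2,A_3$ are listed in~\eqref{intertre}, every off-$H$ plane contains such a line together with a point off $H$, and imposing mutual incidence among these planes forces their $H$-lines to be the pairwise skew lines $L_5,L_6,L_7$ of~\eqref{unduetre}, all three planes passing through the single point $[v_6]$; this identifies them with $\Lambda_5,\Lambda_6,\Lambda_7$, and via~\eqref{trerette} leaves no transversal line available to carry an extra point off $W$, excluding escape and pinning the planes in $H$ to $\Lambda_1,\ldots,\Lambda_4$. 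Once $T$ is shown to be projectively contained in $\{\Lambda_1,\ldots,\Lambda_7\}$, completeness forces equality, for any missing $\Lambda_j$ is incident to every plane of $T$ (the seven are pairwise incident) and so would have to lie in the maximal family $T$. I expect the main obstacle to be precisely this rigidity step: ruling out a plane that meets $C_1$ inside $H$ and thereby carries a point off $W$ (which is exactly what separates $\dim\la T\ra=6$ from larger spans), and organizing the incidence bookkeeping so that the combinatorics is \emph{forced} to be that of the Fano plane rather than merely consistent with it.
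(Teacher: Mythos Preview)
Your opening matches the paper's: manufacture a triangle $\Lambda_1,\Lambda_2,\Lambda_3$ spanning $H\cong\PP^5$ via completeness and~\Ref{crl}{unpunto}. The divergence comes immediately after. The paper does \emph{not} pick an off-$H$ plane and attempt to trap everything in a $\PP^6$; it stays inside $H$ and first produces a fourth plane $\Lambda_4\subset H$ disjoint from the ``central'' plane $\PP\la v_0,v_2,v_4\ra$. The argument is short but is the crux: by~\eqref{intertre} every line of $H$ meeting $\Lambda_1,\Lambda_2,\Lambda_3$ either lies in $\PP\la v_0,v_2,v_4\ra$ or passes through one of its vertices, so if no such $\Lambda_4$ existed then $\PP\la v_0,v_2,v_4\ra$ would be incident to every member of $T$, hence lie in $T$ by completeness, contradicting~\Ref{crl}{unpunto} since it meets $\Lambda_1$ in a line. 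Only \emph{after} $\Lambda_4$ is in hand does~\eqref{trerette} become available, and it is~\eqref{trerette} that pins the $H$-trace of every off-$H$ plane to one of $L_5,L_6,L_7$.

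Your sketch invokes~\eqref{trerette} without $\Lambda_4$, and the assertion ``imposing mutual incidence among these planes forces their $H$-lines to be the pairwise skew lines $L_5,L_6,L_7$'' is not justified: absent $\Lambda_4$, the transversal lines to $\Lambda_1,\Lambda_2,\Lambda_3$ fill the four two-parameter families of~\eqref{intertre}, and many configurations of such lines are mutually incident (any two through $[v_0]$, for instance). This is exactly the obstacle you flag; the paper's route via $\Lambda_4$ sidesteps it, because once the off-$H$ lines are forced to be the pairwise skew $L_5,L_6,L_7$, the three off-$H$ planes must meet pairwise \emph{outside} $H$, and then span $=6$ drops out at the end rather than being proved up front. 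There is a second non-trivial step you have not addressed: showing that the injection~\eqref{intermappa} is \emph{surjective}. Image of size $1$ is excluded easily (every plane through that $L_i$ would be incident to all of $T$), but image of size $2$ requires the explicit computation~\eqref{tittidisegna}, which exhibits an infinite pairwise-incident family in $\PP^5$ containing all the in-$H$ members of $T$ and incident to both off-$H$ members, contradicting finiteness and completeness. Without surjectivity you cannot conclude there are three off-$H$ planes, hence cannot reach~$[v_6]$ at all.
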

\begin{proof}
Let $\Lambda_1,\Lambda_2\in T$ be distinct: by~\Ref{crl}{unpunto} they intersect in a single point $p$ and hence they span a $4$-dimensional projective space $M$. We claim that there does exist $\Lambda_3\in T$ which is not contained in $M$ and which intersects  $\Lambda_1,\Lambda_2$ in distinct points. In fact suppose the contrary. Then we get an infinite family of pairwise incident planes by adding to $T$ the planes $\Lambda\in\Gr(2,M)$ containing $p$: that contradicts the hypothesis that $T$ is a finite complete family of pairwise incident planes. Since the planes $\Lambda_1,\Lambda_2,\Lambda_3$ have distinct pairwise intersections  and they span a $5$-dimensional projective space 
there exists linearly independent $v_0,\ldots,v_5\in\CC^6$ such that $\Lambda_1,\Lambda_2,\Lambda_3$ are as in~\eqref{primiquattro}.  Now let $\Lambda\in T$: since $\Lambda$ intersects  $\Lambda_1,\Lambda_2,\Lambda_3$ one of the following holds:
\begin{enumerate}
\item[(1)]
$\Lambda\subset\PP^5$ (notation as in~\eqref{pokemon}). 
\item[(2)]
$\dim(\Lambda\cap \PP^5)=1$ and the line $\Lambda\cap \PP^5$ is one of $L_5,L_6,L_7$, see~\eqref{unduetre}.
\end{enumerate}
We claim that there exists $\Lambda_4\in T$ which is contained in $\PP^5$ and does not intersect $\PP\la v_0,v_2,v_4\ra$.
In fact if no such $\Lambda_4$ exists then the plane $\PP\la v_0,v_2,v_4\ra$ is incident to all planes in $T$ and intersects each of $\Lambda_1,\Lambda_2,\Lambda_3$ along a line: that is a contradiction because of~\Ref{prp}{unpunto}. We may rename $v_1,v_3,v_5$ so that $\Lambda_4$ is as in~\eqref{primiquattro}. Now  notice that since the span of $T$ has dimension greater than $5$ there does exist $\Lambda\in T$ such that Item~(2) holds.  By~\Ref{crl}{unpunto} we have an injection
\begin{equation}\label{intermappa}
\begin{matrix}
T\setminus\Gr(2,\PP^5) & \hra & \{L_5,L_6,L_7\} \\
\Lambda & \mapsto & \Lambda\cap\PP^5
\end{matrix}
\end{equation}
 We claim that  Map~\eqref{intermappa} is surjective. In fact suppose that the image consists of a single line $L_i$: then every plane containing $L_i$ is incident to every plane in $T$, that contradicts the hypothesis that $T$ is a finite complete family of pairwise incident planes. Now suppose that  the image consists of $2$ lines: without loss of generality we may assume that they are $L_5,L_6$.
A straightforward computation gives that 
\begin{multline}\label{tittidisegna}
\scriptstyle
\{\Lambda\in\Gr(2,\PP^5) \mid \text{$\Lambda$ is incident to $L_5$, $L_6$, $\Lambda_1$, $\Lambda_2$, $\Lambda_3$ and $\Lambda_4$}\}=\PP\la v_0,v_1,v_3,v_4\ra
\cup \{\PP\la v_0,v_1,av_2+bv_3+cv_4\ra\}\cup \\
\scriptstyle
\cup \{\PP\la v_1,v_3,av_0+bv_4+cv_5\ra\}
\cup \{\PP\la v_3,v_4,av_0+bv_1+cv_2\ra\}
\cup \{\PP\la v_0,v_4,av_1+bv_3+cv_5\ra\}.
\end{multline}
Now notice that the right-hand side of~\eqref{tittidisegna} is an infinite family of pairwise incident planes: that contradicts the hypothesis that $T$ is a finite complete family of pairwise incident planes. We have proved  that  
Map~\eqref{intermappa} is surjective. Now let $\Lambda\in T$ be such that Item~(1) holds: then $\Lambda$ is incident to $\Lambda_1,\ldots,\Lambda_4$ and to $L_1,L_2,L_3$: it follows  that $\Lambda\in\{\Lambda_1,\ldots,\Lambda_4\}$ - see the proof of~\Ref{clm}{primacompl}. The set of  $\Lambda\in T$  such that Item~(2) holds consists of $3$ elements, say $\{\Lambda_5, \Lambda_6, \Lambda_7 \}$ where $\Lambda_i\cap \PP^5=L_i$. Since  $L_5,L_6,L_7$ span $\PP^5$ the planes 
$\Lambda_5, \Lambda_6, \Lambda_7 $ intersect in a single point which lies outside $\PP^5$: thus we may complete $v_0,\ldots,v_5$ to a basis of $\CC^7$ by adding a vector $v_6$ such that  $\Lambda_5\cap\Lambda_6\cap\Lambda_7 =\{[v_6]\}$. Then it is clear that $T$ is projectively equivalent  to $\{\Lambda_1,\ldots,\Lambda_7\}$.
\end{proof}
 \section{Complete finite families of pairwise incident planes in $\PP^5$}\label{sec:pianilagr}
 \setcounter{equation}{0}
In the present section we will associate to a finite complete family of pairwise incident planes in $\PP^5$ an EPW-sextic -  a special sextic hypersurface in $\PP^5$  which  comes equipped with a double cover. The double cover of a generic EPW-sextic  is a Hyperk\"ahler $4$-fold deformation equivalent to the Hilbert square of a $K3$. There is a divisor $\Sigma$  in the space of EPW-sextics whose generic point corresponds  to a double cover $X$ whose singular locus is  a  $K3$-surface of degree $2$: it is obtained from a HK $4$-fold $\wt{X}$ by contracting a divisor $E$ which is a conic bundle over the $K3$, see~\cite{ogper}.  Let $Y$ be the EPW-sextic corresponding to $X$: the covering map $X\to Y$ takes the singular locus of $X$ to a plane. There are more special EPW-sextics parametrized by points of $\Sigma$ which correspond to a HK $4$-fold $\wt{X}$ containing more than one of the divisors $E$: the images of these divisors under the composition $\wt{X}\to X\to Y$ are pairwise incident planes. 
 We will show that certain of these EPW-sextics  (introduced by Ferretti~\cite{ferretti}) provide examples of complete families of $k$ pairwise incident planes in $\PP^5$ for $10\le k\le 16$.
  Choose a volume-form  
$\vol\colon\bigwedge^6 \CC^6\overset{\sim}{\lra}\CC$
 and  equip $\bigwedge^3 \CC^6$ with the symplectic form
\begin{equation}
  (\alpha,\beta):=\vol(\alpha\wedge\beta).
\end{equation}
Let $A\subset\bigwedge^3\CC^6$ be a subspace: we let
\begin{eqnarray}\label{eccoteta}
\Theta_A & := & \{W\in\Gr(3,\CC^6) \mid \bigwedge^3 W\subset A\},\\
{\bf \Theta}_A & :=& \{\Lambda\in\Gr(2,\PP^5) \mid \text{$\Lambda=\PP(W)$ where $W\in \Theta_A$}\}.
\end{eqnarray}
The following simple observation will be our  starting point.
\begin{rmk}\label{rmk:incidiso}
Let  $A\subset\bigwedge^3\CC^6$ be {\bf isotropic} for the symplectic form $(,)$. Then ${\bf \Theta}_A$ is a family of pairwise incident planes.  Conversely let $T\subset\Gr(2,\PP^5)$ be a family of pairwise incident planes and $B\subset\bigwedge^3\CC^6$ be the subspace spanned by the vectors $\bigwedge^3 W$ for $W\in\Gr(3,\CC^6)$  such that $\PP(W)\in T$: then $B$ is isotropic for $(,)$.
\end{rmk}
Let $\lagr$ be the symplectic Grassmannian parametrizing Lagrangian subspaces of $\bigwedge^3 \CC^6$ - of course $\lagr$ does not depend on the choice of volume-form. Notice that $\dim\bigwedge^3 \CC^6=20$ and hence elements of $\lagr$ have dimension $10$.
\begin{clm}\label{clm:incilagr}
Let $T\subset\Gr(2,\PP^5)$ be a {\bf complete} family of pairwise incident planes. Then there exists  $A\in\lagr$ such that 
\begin{equation}\label{uguali}
{\bf \Theta}_A=T.
\end{equation}
Conversely suppose that
 $A\in\lagr$  is {\bf spanned} by $\Theta_A$ (embedded in $\bigwedge^3\CC^6$ by Pl\"ucker). Then ${\bf \Theta}_A $ is a complete family of pairwise incident planes. 
\end{clm}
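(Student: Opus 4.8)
My plan rests entirely on the incidence/orthogonality dictionary already recorded in \Ref{rmk}{incidiso}, together with two standard facts of symplectic linear algebra. The dictionary is that two planes $\PP(W),\PP(W')$ are incident precisely when $(\bigwedge^3 W,\bigwedge^3 W')=0$: indeed $\bigwedge^3 W\wedge\bigwedge^3 W'\in\bigwedge^6\CC^6$ is nonzero exactly when $W+W'=\CC^6$, i.e.\ exactly when the two planes are disjoint, so applying the isomorphism $\vol$ shows that the symplectic pairing vanishes iff the planes meet. The two linear-algebra facts I will use are that every isotropic subspace is contained in a Lagrangian one (legitimate since $\dim\bigwedge^3\CC^6=20$, so isotropic subspaces have dimension at most $10$), and that a Lagrangian $A$ satisfies $A^{\perp}=A$.

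For the first assertion I would let $B\subset\bigwedge^3\CC^6$ be the span of the Pl\"ucker vectors $\bigwedge^3 W$ with $\PP(W)\in T$. The converse half of \Ref{rmk}{incidiso} gives that $B$ is isotropic, so I may choose a Lagrangian $A\in\lagr$ with $B\subset A$. Then $\bigwedge^3 W\in B\subset A$ for every $\PP(W)\in T$, so $T\subseteq{\bf \Theta}_A$; and since $A$ is isotropic, \Ref{rmk}{incidiso} shows ${\bf \Theta}_A$ is itself a family of pairwise incident planes. Now every $\Lambda\in{\bf \Theta}_A$ is incident to all members of ${\bf \Theta}_A$, hence in particular to all members of $T$, so completeness of $T$ forces $\Lambda\in T$. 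This yields ${\bf \Theta}_A\subseteq T$ and hence the equality~\eqref{uguali}.

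For the converse I assume $A\in\lagr$ is spanned by $\Theta_A$. Isotropy of $A$ and \Ref{rmk}{incidiso} again give that ${\bf \Theta}_A$ is a family of pairwise incident planes, so only completeness remains. I take an arbitrary plane $\Lambda_0=\PP(W_0)$ incident to every member of ${\bf \Theta}_A$; by the dictionary this says $(\bigwedge^3 W_0,\bigwedge^3 W)=0$ for all $W\in\Theta_A$. Since by hypothesis these vectors span $A$, I conclude $\bigwedge^3 W_0\in A^{\perp}=A$, whence $W_0\in\Theta_A$ and $\Lambda_0\in{\bf \Theta}_A$. Thus no plane outside ${\bf \Theta}_A$ is incident to all of it, which is completeness.

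I do not expect a genuinely hard step; the care lies in pinpointing where each hypothesis enters. In the first part it is completeness of $T$ that upgrades $T\subseteq{\bf \Theta}_A$ to equality, and nothing forces the ambient Lagrangian $A$ to be spanned by $\Theta_A$. In the second part the spanning hypothesis $A=\la\Theta_A\ra$ is the crux and cannot be dropped: without it, orthogonality of $\bigwedge^3 W_0$ to every \emph{decomposable} vector in $A$ would not imply $\bigwedge^3 W_0\in A^{\perp}$, and the membership $\bigwedge^3 W_0\in A$ needed for completeness could fail.
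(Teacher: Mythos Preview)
Your proof is correct and follows essentially the same route as the paper: define $B$ as the span of the Pl\"ucker vectors of $T$, enlarge to a Lagrangian $A$, use \Ref{rmk}{incidiso} to see ${\bf\Theta}_A\supseteq T$ is pairwise incident, and invoke completeness for equality; for the converse, orthogonality to a spanning set of decomposables forces $\bigwedge^3 W_0\in A^{\perp}=A$. The only cosmetic difference is that the paper fixes ten specific $W_1,\ldots,W_{10}\in\Theta_A$ spanning $A$, whereas you work with all of $\Theta_A$ at once.
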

\begin{proof}
Let $B\subset\bigwedge^3\CC^6$ be the subspace spanned by the vectors $\bigwedge^3 W$ for $W\in\Gr(3,\CC^6)$  such that $\PP(W)\in T$: then $B$ is $(,)$-isotropic, see~\Ref{rmk}{incidiso}.  Thus there exists $A\in\lagr$ containing $B$. Then ${\bf \Theta}_A$ is a family of pairwise incident planes, see~\Ref{rmk}{incidiso}, and it contains $T$. Since $T$ is complete we get that~\eqref{uguali} holds.  Now suppose that
 $A\in\lagr$  is spanned by $\bigwedge^3 W_1,\ldots,\bigwedge^3 W_{10}$  where $W_1,\ldots,W_{10}\in\Theta_A$.  
 Suppose that $\PP(W_{*})\in\Gr(2,\PP^5)$ is incident to all $\Lambda\in {\bf \Theta}_A$. Then $\bigwedge^3 W_{*}$ is orthogonal  to  
 $\bigwedge^3 W_1,\ldots,\bigwedge^3 W_{10}$ and hence to all of $A$. Since $A$ is lagrangian we get that   $\PP(W_{*})\in{\bf \Theta}_A$.  This proves that ${\bf \Theta}_A $ is a complete family of pairwise incident planes. 
\end{proof}
Let $A\in\lagr$: according to Eisenbud-Popescu-Walter (see the appendix of~\cite{eispopwal} or~\cite{ogepw}) one associates to $A$ a subset of $\PP^5$ as follows. Given a non-zero $v\in \CC^6$ we let 
\begin{equation}
F_v:=\{\alpha\in\bigwedge^3 \CC^6\mid v\wedge\alpha=0\}.
\end{equation}
 Notice that  $F_v\in \lagr$. We let
\begin{equation}
Y_A=\{[v]\in\PP^5\mid F_v\cap A\not=\{0\}\}.
\end{equation}
The lagrangians $F_v$ are the fibers of a vector-bundle $F$ on $\PP^5$ with $\det F\cong\cO_{\PP^5}(-6)$: it follows that $Y_A$ is the zero-locus of a section of $\cO_{\PP^5}(6)$. Thus either $Y_A=\PP^5$ (this happens for \lq\lq degenerate\rq\rq\ choices of $A$, for example  $A=F_w$) or else $Y_A$ is a sextic hypersurface - an {\it EPW-sextic}. We emphasize that EPW-sextics are very special hypersurfaces, in particular their singular locus has dimension at least $2$.  
An EPW-sextic $Y_A$ comes equipped with a finite map~\cite{ogdoppio}
\begin{equation}\label{rivest}
f_A\colon X_A\to Y_A.
\end{equation}
$X_A$ is the {\it double EPW-sextic} associated to $A$. The following result~\cite{ogepw} motivates the adjective \lq\lq double\rq\rq. Suppose that  
\begin{equation}\label{belcaso}
\text{$\Theta_A=\es$ and $\dim(F_v\cap A)\le 2$ for all $[v]\in\PP^5$.}
\end{equation}
(A dimension count shows that~\eqref{belcaso} holds for generic $A\in\lagr$.)
Then $Y_A\not=\PP^5$ and $X_A$ is a Hyperk\"ahler variety deformation equivalent to the Hilbert square of a $K3$ surface\footnote{Notice that if $A$ is general then $X_A$   is not isomorphic nor birational to the Hilbert square of a $K3$.}, moreover~\eqref{rivest} is identified with the quotient map of an anti-symplectic involution on $X_A$. What if one of the conditions of~\eqref{belcaso} are violated ? If $\Theta_A$ is empty  but there do exist $[v]\in\PP^5$ such that $\dim(F_v\cap A)> 2$ then necessarily $\dim(F_v\cap A)=3$ and $X_A$ is obtained from a holomorphic symplectic $4$-fold by contracting certain copies of $\PP^2$ (one for each point violating the second condition of~\eqref{belcaso}): thus $X_A$ is almost as good as a HK variety. On the other hand suppose that  $\Lambda\in{\bf \Theta}_A$: then $\Lambda\in Y_A$ and $Y_A$   
 and $X_A$ (assuming that $Y_A\not=\PP^5$) may be quite singular along $\Lambda$. 
The following result will  be handy.
\begin{prp}[Cor.~2.5 of~\cite{ogtasso} and Prop.~1.11, Claim 1.12 of~\cite{ogdoppio}]\label{prp:neve}
Let $A\in\lagr$ and $[v]\in\PP^5$.    Then the following hold:
\begin{itemize}
\item[(1)]
If no $\Lambda\in{\bf\Theta}_A$ contains $[v]$ then $Y_A\not=\PP^5$,  $\mult _{[v_0]}Y_A=\dim(A\cap F_{v_0})$ and 
\begin{enumerate}
\item[(1a)]
 if $\dim(F_v\cap A)\le 2$ then $X_A$ is smooth at $f_A^{-1}([v])$,
\item[(1b)]
 if $\dim(F_v\cap A)> 2$ then the analytic germ of $X_A$ at $f_A^{-1}([v])$ (a single point) is isomorphic to the cone over $\PP(\Omega^1_{\PP^2})$.
\end{enumerate}
\item[(2)]
If there exists $\Lambda\in{\bf\Theta}_A$ containing $[v]$ then either $Y_A=\PP^5$ or else $X_A$ is singular at $f_A^{-1}([v])$. 
\end{itemize}
\end{prp}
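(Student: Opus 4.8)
The plan is to realize $Y_A$ as a symmetric determinantal hypersurface and to read off both the multiplicity and the local structure of the double cover $X_A$ from the associated symmetric degeneracy locus. Recall from the discussion preceding the statement that the subspaces $F_v$ are the fibres of a rank-$10$ vector bundle $F$ on $\PP^5$ with $\det F\cong\cO_{\PP^5}(-6)$, and that $Y_A$ is the zero-locus of $\det\lambda_A$, where $\lambda_A\colon F\to(\bigwedge^3\CC^6/A)\otimes\cO_{\PP^5}$ is the composition of the inclusion $F\hra\bigwedge^3\CC^6\otimes\cO_{\PP^5}$ with the quotient by $A$. The first point is that $A$ being lagrangian, together with the fact that each $F_v$ is lagrangian (see~\Ref{rmk}{incidiso}), makes $\lambda_A$ symmetric: the symplectic form identifies $\bigwedge^3\CC^6/A$ with $A^\vee$ and $\bigwedge^3\CC^6/F_v$ with $F_v^\vee$, and under these identifications $\lambda_A$ agrees with its own transpose. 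Consequently, near a point $[v]$ one may split off the non-degenerate part of $\lambda_A$ and reduce to a germ of a \emph{symmetric} $k\times k$ matrix of holomorphic functions $M(x)$ with $M(0)=0$, where $k:=\dim(A\cap F_v)=\cork\lambda_A([v])$, and $Y_A$ is locally $\{\det M=0\}$.

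Next I would prove the multiplicity formula together with the assertion $Y_A\neq\PP^5$ of item~(1). The leading term of $M$ is a linear map $M_{1}\colon T_{[v]}\PP^5\to \Sym^2 U^\vee$, where $U:=A\cap F_v$; equivalently a net of quadrics on $U$. Since $\det M$ has leading term $\det M_1$, a homogeneous form of degree $k$, one gets $\mult_{[v]}Y_A=k$ as soon as $\det M_1\not\equiv 0$, and in that case $\det\lambda_A\not\equiv 0$, i.e.\ $Y_A\neq\PP^5$. The key lemma is therefore the dictionary: the planes $\Lambda\in{\bf\Theta}_A$ through $[v]$ are exactly the common degeneracy directions of the net $\{M_1(x)\}$, so that $\det M_1\equiv 0$ forces the existence of such a plane. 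Granting this, the hypothesis of~(1) that no $\Lambda\in{\bf\Theta}_A$ contains $[v]$ yields $\det M_1\not\equiv0$, hence $\mult_{[v]}Y_A=\dim(A\cap F_v)$ and $Y_A\neq\PP^5$. Establishing this dictionary is the computational heart of~(1): one must express $M_1(x)(\alpha,\alpha')$ explicitly as a pairing built from $x\in\CC^6$ and $\alpha,\alpha'\in U\subset F_v$ via the wedge product and the volume form, check that the lagrangian condition makes it symmetric (rather than alternating), and translate $\det M_1\equiv0$ into the statement that the quadrics share a common isotropic vector arising from a decomposable $\bigwedge^3W\subset A\cap F_v$.

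For items~(1a) and~(1b) the remaining task is to identify the germ of the double cover $X_A$ over such a point. Locally $X_A$ is the double cover of $\{\det M=0\}$ determined by the cokernel sheaf $\coker\lambda_A$ together with its natural self-pairing, and for $M_1$ generic of corank $k$ it is the double cover of the germ of the rank-$(k-1)$ symmetric determinantal variety. For $k=1$ the point $[v]$ is a smooth point of $Y_A$, the cover is étale of degree $2$ near its two preimages, and $X_A$ is smooth. For $k=2$ the germ of $Y_A$ is $\{ac-b^2=0\}$ times a smooth factor, i.e.\ an $A_1$-surface times a polydisc; its canonical double cover is $\CC^2$ times the smooth factor, so $X_A$ is again smooth and $f_A^{-1}([v])$ is a single point. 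This proves~(1a). For $k\ge3$ (the hypothesis of~(1) in fact forces $k=3$ here) the germ of $Y_A$ is the symmetric determinantal cubic $\{\det M=0\}$ with $M$ a generic $3\times3$ symmetric matrix of linear forms, and a direct computation identifies its double cover with the affine cone over $\PP(\Omega^1_{\PP^2})$ in its natural embedding, which is item~(1b).

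Finally, item~(2): if some $\Lambda=\PP(W)\in{\bf\Theta}_A$ passes through $[v]$ then $\bigwedge^3W\subset A\cap F_{v'}$ for every $[v']\in\Lambda$, so $\Lambda\subset Y_A$ and the cokernel sheaf acquires a constant summand along $\Lambda$; assuming $Y_A\neq\PP^5$, this forces the double cover to be non-normal or otherwise singular along $f_A^{-1}(\Lambda)$, so $X_A$ is singular at $f_A^{-1}([v])$. The main obstacle throughout is the symmetric reduction together with the dictionary of the second paragraph: one must carry out the local linear algebra of $\lambda_A$ honestly, verify the symmetry forced by the lagrangian condition, and pin down the corank-$3$ germ as the stated cone. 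These are precisely the computations of Cor.~2.5 of~\cite{ogtasso} and of Prop.~1.11 and Claim~1.12 of~\cite{ogdoppio} cited in the statement.
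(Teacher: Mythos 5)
First, a caveat about the comparison you asked for: the paper does not prove this Proposition at all. It is imported wholesale from Cor.~2.5 of \cite{ogtasso} and Prop.~1.11, Claim~1.12 of \cite{ogdoppio}, so there is no internal proof to measure your argument against. That said, your sketch does reconstruct the strategy actually used in those references: the local realization of $Y_A$ as $\{\det M=0\}$ for a germ of a symmetric $k\times k$ matrix with $k=\dim(A\cap F_v)$ (the symmetry coming from the fact that $A$ and $F_v$ are both Lagrangian), the identification of the degree-$k$ part of $\det M$ with the determinant of the net of Pl\"ucker quadrics $\psi^{v}_w$ restricted to $A\cap F_v$, and the analysis of the double cover via the adjugate of $M$. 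The scaffolding is the right one.

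As a proof, however, the sketch has gaps exactly where the content lies. (i) Your ``dictionary'' passes from $\det M_1\equiv 0$ to a plane of ${\bf\Theta}_A$ through $[v]$ by asserting that a linear system of everywhere-degenerate quadrics must have a common isotropic vector. That implication is not a formal fact about linear systems of quadrics; what rescues it here is that the hypothesis of (1) forces $k\le 3$ from the start (a $\PP^3\subset\PP(F_v)\cong\PP^9$ must meet the $6$-dimensional $\Gr(2,V_0)$, so $\PP(A\cap F_v)$ disjoint from the decomposables has dimension at most $2$), after which one needs either the classification of linear subspaces of the discriminant of quadrics in at most three variables or an argument exploiting the specific shape of the $\psi^v_w$ (as in \Ref{clm}{roncisvalle}). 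Neither appears in your sketch. Beware also of conflating ``common kernel (degeneracy) direction'' with ``common isotropic vector'': a decomposable $\bigwedge^3W\in A\cap F_v$ is isotropic for every $\psi^v_w$ but is not in general in the kernel of $\psi^v_w|_{A\cap F_v}$. (ii) In (1a) and (1b) you work ``for $M_1$ generic of corank $k$'', but no genericity is available: for $k=2$ the local model $\{\xi_1^2=c,\ \xi_1\xi_2=-b,\ \xi_2^2=a\}$ of the double cover is smooth only if $x\mapsto M_1(x)$ surjects onto $\Sym^2(A\cap F_v)^{\vee}$, and this surjectivity has to be deduced from the hypotheses --- it is precisely the substance of Prop.~1.11 of \cite{ogdoppio}; the same issue affects the identification of the corank-$3$ germ with the cone over $\PP(\Omega^1_{\PP^2})$. (iii) Item (2) is asserted rather than argued: ``forces the double cover to be non-normal or otherwise singular'' is the conclusion, not a proof. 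The actual mechanism is a local computation, e.g.\ that when $A\cap F_{v}$ contains a decomposable $\bigwedge^3W=v\wedge\beta$ the relevant leading term of the local equation vanishes because $\beta\wedge\beta=0$, so $Y_A$ has multiplicity strictly larger than $\dim(A\cap F_v)$ at $[v]$, followed by an analysis of the cover over such a point. In short: you have correctly identified the framework of the cited proofs, but each of (1), (1a), (1b) and (2) still needs the computation it is meant to encapsulate, and in (i) the reduction you propose is stated in a form that is not actually true in general.
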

Next we will define an $A\in\lagr$ such that $Y_A$ is a triple quadric: the example will be a key element in the construction of   complete families of pairwise incident planes of cardinality between $10$ and $16$. Choose an isomorphism $\CC^6=\bigwedge^2 U$ where $U$ is a complex vector-space of dimension $4$. Thus $\Gr(2,U)\subset\PP(\CC^6)$ is a smooth quadric hypersurface: we let
\begin{equation}
Q(U):=\Gr(2,U).
\end{equation}
 We have an embedding
\begin{equation}\label{mappapiu}
\begin{matrix}
\PP(U) & \overset{i_{+}}{\hra} & \Gr(2,\PP^5)\\
[u_0] & \mapsto & \PP\{u_0\wedge u\mid u\in U\}
\end{matrix}
\end{equation}
\begin{dfn}
Let $A_{+}(U)\subset\bigwedge^3(\CC^6)$ be the 
subspace spanned by the cone over $\im(i_{+})$ - here we view $\Gr(2,\PP^5)$ as embedded in $\PP(\wedge^3\CC^6)$ by the Pl\"ucker map. 
\end{dfn}
Let $\cL$ be  Pl\"ucker line-bundle on $\Gr(2,\PP^5)$. Then $i_{+}^{*}\cL\cong \cO_{\PP(U)}(2)$ and the induced map on global sections is surjective: thus $\dim A_{+}(U)=10$. On the other hand any two planes in the image of $i_{+}$ are incident: thus  $A_{+}(U)\in\lagr$, see~\Ref{rmk}{incidiso}.  One has (see Claim 2.14 of~\cite{ogtasso}) 
\begin{equation}
Y_{A_{+}(U)}=3 Q(U).
\end{equation}
Let ${\bf K}\subset \PP(U)$ be a Kummer quartic surface and let ${\bf p}_1,\ldots,{\bf p}_{16}$ be its nodes. Choose  $k$  nodes ${\bf p}_{i_1},\ldots,{\bf p}_{i_k}$. There exist arbitrarily small deformations of ${\bf K}$ which contain exactly $k$ nodes which are small deformations of ${\bf p}_{i_1},\ldots,{\bf p}_{i_k}$ and are smooth elsewhere (it suffices to deform the minimal desingularization of ${\bf K}$ keeping  the rational curves lying over ${\bf p}_{i_1},\ldots,{\bf p}_{i_k}$ of type $(1,1)$ and not keeping of type $(1,1)$  the rational curves lying over the remaining nodes). Let $S_0$ be such a small deformation of ${\bf K}$  and $p_1,\ldots,p_{k}$ be its nodes. 
Let $\wt{S}_0\to S_0$ be the minimal desingularization: thus $\wt{S}_0$ is a $K3$ surface containing $k$ smooth rational curves $R_1,\ldots,R_{k}$ mapping to $p_1,\ldots,p_{k}$ respectively. The HK $4$-fold $\wt{S}_0^{[2]}$ contains $k$ disjoint copies of $\PP^2$ namely $R^{(2)}_1,\ldots,R^{(2)}_{k}$. We have a regular map
\begin{equation}\label{corda}
\begin{matrix}
\wt{S}_0^{[2]}\setminus \bigcup_{i=1}^{k}R^{(2)}_i & \lra & Q(U) \\
& & \\
Z & \mapsto & \la Z\ra 
\end{matrix}
\end{equation}
where $\la Z\ra$ is the unique line containing the scheme $Z$. One cannot extend the above map to a regular map over $R^{(2)}_i $. Let $\wt{S}_0^{[2]}\dashrightarrow X$ be the flop of $R^{(2)}_1,\ldots,R^{(2)}_{k}$ i.e.~the blow-up of each $R^{(2)}_i\cong\PP^2$ followed by contraction of the exceptional fiber $E_i$ (which is isomorphic to the incidence variety in $\PP^2\times(\PP^2)^{\vee}$) along the projection $E_i \to (\PP^2)^{\vee}$. Map~\eqref{corda} extends~\cite{ferretti} to a regular degree-$6$ map
\begin{equation}\label{estendo}
X \lra  Q(U).
\end{equation}
The following result is due to Ferretti:
\begin{prp}[Ferretti, Prop.~4.3 of~\cite{ferretti}]\label{prp:degequar}
Keep notation as above. There exist a commutative diagram 
\begin{equation}\label{trekking}
\xymatrix{  
\cX \ar_{\pi}[dr]  \ar^{G}[rr] &      &  \cU\times\PP^5 \ar[dl] \\
  & \cU &}
\end{equation}
and  maps
\begin{equation*}
\cU\overset{A}{\lra}\lagr,\qquad \cU\overset{\Lambda_i}{\lra}\Gr(2,\PP^5),\quad i=1,\ldots,k
\end{equation*}
such that the following hold:
\begin{enumerate}
\item[(1)]
$\cU$ is a connected contractible manifold of dimension $(20-k)$.
\item[(2)]
$\pi$ is a proper map and a submersion of complex manifolds: for $t\in \cU$ we let $X_t:=\pi^{-1}(t)$ and $g_t\colon X_t\to\PP^5$ be the regular map induced by $G$.
\item[(3)]
There exists $0\in\cU$ and an isomorphism  $X_0\cong X$ such that $g_0$ gets identified with Map~\eqref{estendo}. Moreover $A(0)=A_{+}(U)$ and $\Lambda_i(0)=i_{+}(p_i)$.
\item[(4)]
There exist   a regular map $c_t\colon X_t\to X_{A(t)}$ and  prime divisors $E_i(t)$ on $X_t$   for $i=1,\ldots,k$  such that
 the following hold  for all $t$ belonging to an open dense $\cU^0\subset\cU$:
\begin{enumerate}
\item[(4a)]
$g_t=f_{A(t)}\circ c_t$.
\item[(4b)]
$g_t(E_i(t))=\Lambda_i(t)$ for $i=1,\ldots,k$.
\item[(4c)]
$c_t$  contracts  each $E_i(t)$ to a  
 $K3$ surface $S_i(t)\subset X_{A(t)}$
and is an isomorphism of the complement of $\cup_{i=1}^{k}E_i(t)$ onto its image.  
\end{enumerate}
\item[(5)]
The period map $\cU\to\PP(H^2(X_0;\CC))$ is an immersion i.e.~the family of deformations of $X_0$ parametrized by $\cU$ has $(20-k)$ moduli.
\end{enumerate}
\end{prp}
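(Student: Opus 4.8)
The plan is to realize $\cU$ as a Hodge locus inside the Kuranishi family of $X_0$ and then to spread the geometric structures out over it. Since $X_0=X$ is a flop of $\wt{S}_0^{[2]}$, it is a Hyperk\"ahler fourfold of $K3^{[2]}$-type (flops preserve the deformation class and act by an isometry on $H^2$); hence $\Def(X_0)$ is smooth of dimension $h^{1,1}(X_0)=21$ and local Torelli holds. Inside $\NS(X_0)$ I would single out the class $h:=g_0^{*}\cO_{\PP^5}(1)$, which is nef and big (it defines $g_0$, a degree-$6$ map onto the four-fold $Q(U)$) and globally generated, together with the classes $E_1,\dots,E_k$ of the prime divisors that appear on the generic fibre. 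The key numerical input is the lattice computation showing that $h,E_1,\dots,E_k$ are linearly independent and span a sublattice $\la h,E_1,\dots,E_k\ra$ of signature $(1,k)$: the single positive direction is carried by the polarization $h$ (so $q(h)>0$), while the $E_i$ contribute a negative-definite part, as befits exceptional classes. I would carry out this computation on $\wt{S}_0^{[2]}$, using $H^2(\wt{S}_0^{[2]})\cong H^2(\wt{S}_0)\oplus\ZZ\delta$ and the $(-2)$-classes $R_i$, and transport it across the flop.

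Granting the signature, local Torelli identifies the locus $\cU\subset\Def(X_0)$ on which $h,E_1,\dots,E_k$ all remain of type $(1,1)$ with a neighbourhood of the base point in the linear section $\la h,E_1,\dots,E_k\ra^{\perp}\cap\Omega$ of the period domain $\Omega$. A subspace of signature $(1,k)$ has orthogonal complement of signature $(2,20-k)$ (the total signature being $(3,20)$), so this section is nonempty and smooth of dimension $20-k$; after shrinking to a ball neighbourhood of $0$ we obtain a connected contractible $\cU$ of dimension $20-k$, with $\pi\colon\cX\to\cU$ the restriction of the proper smooth Kuranishi family. This yields~(1) and~(2) and, simultaneously,~(5): by construction the period map $\cU\to\Omega$ is the inclusion of this linear section, hence an immersion.

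It remains to spread out the morphisms. Because $h$ is globally generated at $0$ and, by Kawamata--Viehweg vanishing (here $K_{X_t}\cong\cO_{X_t}$ and $h$ is nef and big), $H^{i}(X_t,h)=0$ for $i>0$, cohomology-and-base-change makes $\dim H^0(X_t,h)=6$ constant and base-point-freeness an open condition, so after further shrinking $\cU$ the relative system $|h|$ defines $G\colon\cX\to\cU\times\PP^5$ with $g_0$ identified with~\eqref{estendo}; this produces~\eqref{trekking} and the $t=0$ part of~(3). The Lagrangian $A(t)$ is then reconstructed from the polarized fourfold $(X_t,h)$ by the relative Eisenbud--Popescu--Walter construction: the image sextic of $g_t$ is $Y_{A(t)}$, the map $A\colon\cU\to\lagr$ depends holomorphically on $t$, and $A(0)=A_{+}(U)$ is forced by $Y_{A(0)}=3Q(U)$. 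Setting $\Lambda_i(t):=g_t(E_i(t))\in{\bf\Theta}_{A(t)}$ and factoring $g_t$ through the normalization data of its image gives the contraction $c_t\colon X_t\to X_{A(t)}$; over the dense open $\cU^0$ of sufficiently generic fibres I would invoke~\cite{ogper} and~\Ref{prp}{neve} to identify $c_t$ as the contraction of the prime divisor $E_i(t)$ --- a conic bundle --- onto a $K3$ surface $S_i(t)$, giving~(4a)--(4c).

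The hard part is precisely this last step: controlling the birational geometry of the fibres $X_t$ across $\cU$ so that the contraction $c_t$ keeps the same type. One must rule out that, along the family, the class $E_i$ ceases to be represented by an irreducible divisor contracting to a $K3$ (as opposed to a small contraction, or a further degeneration of $S_i(t)$), and one must produce $A(t)$ together with the factorization $g_t=f_{A(t)}\circ c_t$ holomorphically in families rather than merely fibrewise. This amounts to matching the abstract Hodge-theoretic deformation with the concrete EPW picture --- understanding the walls of the movable cone of $X_0$ and the stratification of $\lagr$ by the number of planes in ${\bf\Theta}_A$ (equivalently, running the same construction directly on the Lagrangian side, deforming $A_{+}(U)$ within $\lagr$ through Lagrangians carrying exactly $k$ planes) --- and is where Ferretti's detailed study of degenerations of double EPW-sextics does the real work.
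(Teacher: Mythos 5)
The first thing to note is that the paper does not prove this statement at all: it is Proposition~4.3 of Ferretti~\cite{ferretti}, quoted with attribution and used as a black box, so there is no in-paper argument to compare your proposal against. Judged on its own terms, your sketch reconstructs what is indeed the right overall strategy: realize $\cU$ as the locus in $\Def(X_0)$ where $h=g_0^{*}\cO_{\PP^5}(1)$ and the $k$ nodal classes stay of type $(1,1)$, deduce $\dim\cU=20-k$ and item~(5) from the signature count (a $(1,k)$ sublattice of the $(3,20)$ lattice has orthogonal complement of signature $(2,20-k)$), and spread out $|h|$ using semicontinuity together with $\chi(h)=6$ from Riemann--Roch on $K3^{[2]}$-type fourfolds. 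That much genuinely delivers (1), (2), the diagram~\eqref{trekking}, and (5), modulo the deferred lattice computation (which must be phrased with some care, since on $X_0$ itself the flopped $R_i^{(2)}$ are planes, not divisors, so the classes being kept algebraic are the $(-2)$-classes $r_i\in H^2(\wt{S}_0)$ transported through the flop, not effective divisor classes at $t=0$).

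The genuine gap is item~(4), and you name it yourself. Item~(4c) is precisely what the paper extracts from this proposition in the following proof (to rule out extra planes in ${\bf\Theta}_{A(t)}$ via \Ref{prp}{neve}), so a proof that asserts (4) rather than establishes it omits the load-bearing part of the statement. Concretely, two things are missing. First, you must produce $A(t)$ holomorphically in $t$ and show that $g_t$ is generically $2:1$ onto the EPW-sextic $Y_{A(t)}$ for $t\ne 0$; this is not a formal consequence of the ``relative EPW construction,'' since at $t=0$ the map $g_0$ is $6:1$ onto a quadric and $Y_{A(0)}=3Q(U)$ is non-reduced, so the central fibre sits at a genuinely degenerate point of $\lagr$. (Relatedly, $Y_{A}=3Q(U)$ does \emph{not} force $A=A_{+}(U)$: the Lagrangian built from the other family of planes on $Q(U)$ gives the same triple quadric, so your argument for (3) needs the planes $\Lambda_i(0)=i_{+}(p_i)$ to single out the correct component.) Second, you must show that for $t$ in a dense open subset the classes kept algebraic are represented by irreducible conic-bundle divisors $E_i(t)$ that $c_t$ contracts to $K3$ surfaces, with $c_t$ an isomorphism elsewhere; this requires controlling the nef and movable cones of $X_t$ along $\cU$ and matching the Hodge-theoretic deformation with the stratification of $\lagr$, which is exactly the content of Ferretti's degeneration analysis. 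As written, the proposal is an accurate plan for, rather than a proof of, the part of the proposition that the paper actually uses.
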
  
 Given~\Ref{prp}{degequar} it is easy to show that there exist  complete families of pairwise incident planes of cardinality $k$ for $10\le k\le 16$. Before stating the relevant result we recall that the $K3$ surface  $\wt{S}_0$ depends on the choice of nodes ${\bf p}_{i_1},\ldots,{\bf p}_{i_k}$ and hence so does the variety $X$. 
 \begin{prp}
Keep notation as in~\Ref{prp}{degequar} and let $10\le k\le 16$. Let $t\in\cU^0$ be close to $0$. One can choose the nodes ${\bf p}_{i_1},\ldots,{\bf p}_{i_k}$ of ${\bf K}$ so that  ${\bf \Theta}_{A(t)}$ is a complete family of pairwise incident planes of cardinality $k$. 
\end{prp}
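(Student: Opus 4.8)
The plan is to check the two hypotheses under which the converse part of \Ref{clm}{incilagr} applies to ${\bf\Theta}_{A(t)}$: that it is a finite set of exactly $k$ pairwise incident planes, and that $A(t)$ is spanned through Pl\"ucker by $\Theta_{A(t)}$. Completeness is then immediate from \Ref{clm}{incilagr}. Fix $t\in\cU^0$ close to $0$, with notation as in \Ref{prp}{degequar}. By (4a)--(4c) the map $c_t$ contracts $E_i(t)$ onto the $K3$ surface $S_i(t)\subset X_{A(t)}$, and $f_{A(t)}(S_i(t))=g_t(E_i(t))=\Lambda_i(t)$; since $A(t)$ is Lagrangian, hence isotropic, \Ref{rmk}{incidiso} shows that ${\bf\Theta}_{A(t)}$ is a family of pairwise incident planes. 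Moreover $\Lambda_i(0)=i_{+}(p_i)$ and the nodes $p_i$ are distinct, so $\Lambda_1(t),\dots,\Lambda_k(t)$ are pairwise distinct for $t$ near $0$; it therefore suffices to prove ${\bf\Theta}_{A(t)}=\{\Lambda_1(t),\dots,\Lambda_k(t)\}$.

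First I would identify the singular locus of $X_{A(t)}$. As $\pi$ is a submersion the fibre $X_t$ is smooth, and by (4c) $c_t$ is an isomorphism off $\bigcup_i E_i(t)$ onto its image; being a birational morphism of irreducible projective fourfolds it is surjective, so $X_{A(t)}\setminus\bigcup_i S_i(t)$ is isomorphic to an open subset of $X_t$ and hence smooth. Thus $\sing X_{A(t)}\subseteq\bigcup_i S_i(t)$. Now let $\Lambda\in{\bf\Theta}_{A(t)}$. By part (2) of \Ref{prp}{neve} (note $Y_{A(t)}\neq\PP^5$ for $t\in\cU^0$) the fourfold $X_{A(t)}$ is singular along $f_{A(t)}^{-1}(\Lambda)$, a surface mapping finitely onto $\Lambda$; being two-dimensional and contained in $\bigcup_i S_i(t)$ it must contain some irreducible $S_j(t)$, whence $\Lambda_j(t)=f_{A(t)}(S_j(t))\subseteq\Lambda$ and so $\Lambda=\Lambda_j(t)$. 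This gives ${\bf\Theta}_{A(t)}\subseteq\{\Lambda_1(t),\dots,\Lambda_k(t)\}$; in particular ${\bf\Theta}_{A(t)}$ is finite. For the reverse inclusion fix $i$, choose $[v]\in\Lambda_i(t)$ and a point of $S_i(t)$ over it: there $X_{A(t)}$ has a singularity that is not isolated (it is singular along the surface $S_i(t)$), so neither alternative (1a) nor (1b) of \Ref{prp}{neve} can hold at $[v]$, and part (1) forces some plane of ${\bf\Theta}_{A(t)}$ to pass through $[v]$. Hence the finitely many planes of ${\bf\Theta}_{A(t)}$ cover $\Lambda_i(t)$, which is possible only if one of them equals $\Lambda_i(t)$; thus $\Lambda_i(t)\in{\bf\Theta}_{A(t)}$ and equality holds, so $\#{\bf\Theta}_{A(t)}=k$.

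It remains to secure the spanning condition, and this is the crux. Write $B(s)\subseteq A(s)$ for the span of the Pl\"ucker points $\bigwedge^3 W_i(s)$ of $\Lambda_1(s),\dots,\Lambda_k(s)$; since $\dim A(s)=10$, what we need is $\dim B(s)=10$, an open condition in $s$ by lower semicontinuity of rank. I would verify it at $s=0$. There $A(0)=A_{+}(U)$ and, because $i_{+}^{*}\cL\cong\cO_{\PP(U)}(2)$, the Pl\"ucker point of $\Lambda_i(0)=i_{+}(p_i)$ is the image of the node $p_i$ under the degree-two Veronese embedding of $\PP(U)=\PP^3$ into $\PP(A_{+}(U))$. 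Hence $\dim B(0)=10-\dim\{\text{quadrics of }\PP^3\text{ through }{\bf p}_{i_1},\dots,{\bf p}_{i_k}\}$, so $\dim B(0)=10$ exactly when no quadric contains all the chosen nodes. This is where one uses the freedom in choosing the nodes together with the classical fact that the sixteen nodes of a general Kummer quartic impose independent conditions on quadrics, i.e.\ no quadric of $\PP^3$ passes through all sixteen of them; consequently they span the ten-dimensional $A_{+}(U)$, and since $k\ge 10$ one may take ten of them spanning $A_{+}(U)$ and enlarge to any prescribed $k$ nodes. With such a choice $\dim B(0)=10$, hence $\dim B(t)=10$ and $A(t)=B(t)$ is spanned by $\Theta_{A(t)}$ for $t$ near $0$, so \Ref{clm}{incilagr} yields completeness. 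The main obstacle is precisely this statement about the position of the Kummer nodes relative to quadrics (which is already implicit in the case $k=16$); granting it, the rest is a routine assembly of \Ref{prp}{degequar}, \Ref{prp}{neve} and \Ref{clm}{incilagr}.
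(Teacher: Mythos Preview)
Your argument is essentially the paper's: use the classical fact that no quadric of $\PP(U)$ contains the sixteen Kummer nodes to get the spanning condition at $t=0$ (hence nearby by semicontinuity), and use \Ref{prp}{neve}(2) together with item~(4c) of \Ref{prp}{degequar} to rule out extra planes in ${\bf\Theta}_{A(t)}$. The paper's proof is organized in the opposite order, though, and this matters for one step.

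In the paper, spanning comes first. Once $\bigwedge^3 W_1(t),\ldots,\bigwedge^3 W_k(t)$ span $A(t)$, each $\bigwedge^3 W_i(t)$ lies in $A(t)$ \emph{by definition}, so $\Lambda_i(t)\in{\bf\Theta}_{A(t)}$ is immediate; one then only has to exclude further planes, exactly as you do. By contrast, your reverse inclusion $\Lambda_i(t)\in{\bf\Theta}_{A(t)}$ rests on the assertion that $X_{A(t)}$ is singular along $S_i(t)$. That is true here, but it does not follow from item~(4c) alone: a birational morphism from a smooth fourfold can contract a $\PP^1$-bundle over a smooth surface to a \emph{smooth} target (the blow-down of a smooth surface). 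To justify singularity along $S_i(t)$ you would need extra input about the normal bundle of the contracted fibres (or the description in the paper's introduction of $E$ as a conic bundle whose contraction produces a $K3$ surface of singularities), none of which is packaged in \Ref{prp}{degequar}. Simply moving your spanning paragraph before the reverse inclusion removes the issue entirely and makes your proof coincide with the paper's.
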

 \begin{proof}
 The map $i_{+}$ is identified with the map associated to the complete  linear system $|\cO_{\PP(U)}(2)|$. It  is well-know     that no quadric in $\PP(U)$ contains ${\bf p}_1,\ldots,{\bf p}_{16}$\footnote{Suppose that the quadric $Q_0$ contains $p_1,\ldots,p_{16}$. There exist $16$ planes $L_1,\ldots,L_{16}\subset\PP(U)$ such that each $L_j$ contains $6$ of the nodes of ${\bf K}$ and moreover there is a unique smooth conic $C_j\subset L_j$ containing the six nodes. It follows that  $Q_0$ contains $C_1,\ldots,C_{16}$ and hence $Q_0\cap {\bf K}$ has degree at least $32$:  that contradicts B\'ezout.}. Since $10\le k\le 16 $ we may choose ${\bf p}_{i_1},\ldots,{\bf p}_{i_k}$ such  that no quadric in $\PP(U)$ contains them. Thus  $i_{+}({\bf p}_{i_1}),\ldots,i_{+}({\bf p}_{i_k})$ span a $10$-dimensional subspace of $\PP(\bigwedge^3\CC^6)$. It follows that for small enough $t\in\cU^0$ the planes $\Lambda_1,\ldots,\Lambda_k$ span $\PP(A(t))$.
 By~\Ref{clm}{incilagr}   it remains to prove that no other plane is contained in ${\bf \Theta}_{A(t)}$. Suppose that $\Lambda\in{\bf \Theta}_{A(t)}$ and that $\Lambda\notin\{\Lambda_1(t),\ldots,\Lambda_{k}(t)\}$. By Item~(2) of~\Ref{prp}{neve} we get that $X_{A(t)}$ is singular along $f_{A(t)}^{-1}(\Lambda)$: since  $\Lambda\notin\{\Lambda_1(t),\ldots,\Lambda_{k}(t)\}$ that contradicts Item~(4c) of~\Ref{prp}{degequar}.
\end{proof}
 \section{Upper bound}\label{sec:stimasup}
 \setcounter{equation}{0}
 We will prove that a finite complete family of pairwise incident planes in $\PP^5$ has at most $20$ elements. The key element in the proof is the following construction from~\cite{ogtanto}: given $A\in\lagr$ and $W\in\Theta_A$ we consider the locus
 \begin{equation}
C_{W,A}:=\{[v]\in\PP(W) \mid \dim(F_v\cap A)\ge 2\}.
\end{equation}
(Notice that $\dim(F_v\cap A)\ge 1$ for $[v]\in \PP(W)$ because $\bigwedge^3 W\subset(F_v\cap A)$.) One describes $C_{W,A}$ as the degeneracy locus of a map between vector-bundles of rank $9$: the fiber over $[v]$ of the domain  is equal to  $F_v/\bigwedge^3 W$, the codomain  is the trivial vector-bundle with fiber $\bigwedge^3 W^{\bot}/\bigwedge^3 W$ - see~\cite{ogtanto} for details. It follows that either $C_{W,A}=\PP(W)$ or else $C_{W,A}$ is a sextic curve. The link with our problem is the following. Suppose that $C_{W,A}\not=\PP(W)$ and that $W'\in\Theta_A$ is distinct from $W$:   then $\PP(W\cap W')$ is contained in the singular locus of $C_{W,A}$.  In order to state the relevant results from~\cite{ogtanto} we give a couple of definitions. Let $W\subset V$ be a subspace: we let
\begin{equation}\label{essewu}
S_W:= (\bigwedge^2 W)\wedge \CC^6.
\end{equation}
\begin{dfn}\label{dfn:malvagio}
Let $A\in\lagr$ and suppose that $W\in\Theta_A$. We let $\cB(W,A)\subset\PP(W)$ be the set of $[v]$ such that one of the following holds:
\begin{itemize}
\item[(1)]
There exists $W'\in(\Theta_A\setminus\{W\})$ such that  $[v]\in \PP(W')$.
\item[(2)]
$\dim(A\cap F_{v}\cap S_W)\ge 2$.
\end{itemize}
\end{dfn}
One checks easily that $\cB(W,A)$ is closed subset of $\PP(W)$.
\begin{prp}[\rm Corollary 3.2.7 of~\cite{ogtanto}]\label{prp:cnesinerre}
Let $A\in\lagr$ and suppose that $W\in\Theta_A$. Then $C_{W,A}=\PP(W)$ if and only if 
$\cB(W,A)=\PP(W)$.  If $C_{W,A}\not=\PP(W)$ then  $\cB(W,A)\subset \sing C_{W,A}$.
\end{prp}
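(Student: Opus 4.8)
The plan is to use the stated description of $C_{W,A}$ as the degeneracy locus of the rank-nine bundle map $\phi\colon\cF\to\cG$ on $\PP(W)$, with $\cF$ having fibre $F_v/\bigwedge^3 W$, codomain $(\bigwedge^3 W)^{\bot}/A$, and $\ker\phi_{[v]}=(F_v\cap A)/\bigwedge^3 W$; thus $C_{W,A}$ is the zero scheme of $\det\phi\in H^0(\cO_{\PP(W)}(6))$, and the whole point is to compare the vanishing of the $1$-jet of $\det\phi$ at $[v]$ with membership of $[v]$ in $\cB(W,A)$. First I would record the trivial inclusion $\cB(W,A)\subseteq C_{W,A}$: in case (1) the two distinct lines $\bigwedge^3 W\ne\bigwedge^3 W'$ both lie in $F_v\cap A$, while case (2) gives $\dim(F_v\cap A)\ge\dim(F_v\cap A\cap S_W)\ge 2$ outright; either way $\dim(F_v\cap A)\ge 2$. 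This already yields the implication $\cB(W,A)=\PP(W)\Rightarrow C_{W,A}=\PP(W)$.

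Next I would compute the singular locus by splitting on $\dim(F_v\cap A)$, i.e.\ on the corank of $\phi$. Where $\dim(F_v\cap A)\ge 3$ the adjugate of $\phi$ (entries the $8\times 8$ minors) vanishes, so $d(\det\phi)=0$ and such points lie in $\sing C_{W,A}$ whenever $C_{W,A}\ne\PP(W)$. The substantive case is $\dim(F_v\cap A)=2$: write $A\cap F_{v_0}=\bigwedge^3 W\oplus\CC\beta_0$ with $\beta_0=v_0\wedge\gamma_0$. Moving $v_0$ sends the kernel generator along $u\in W$ to the class of $u\wedge\gamma_0$; using that $A,F_{v_0}\subseteq(\bigwedge^3 W)^{\bot}$ and $(A+F_{v_0})^{\bot}=A\cap F_{v_0}$ (both Lagrangian), the cokernel value vanishes iff $(u\wedge\gamma_0,\beta_0)=\vol(u\wedge v_0\wedge\gamma_0\wedge\gamma_0)=0$, the pairing with $\omega_W$ being automatic. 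Hence $[v_0]\in\sing C_{W,A}$ exactly when this functional kills $W$, which unwinds to the clean criterion
\[
\psi:=v_0\wedge\gamma_0\wedge\gamma_0\in(\textstyle\bigwedge^3 W)\wedge\textstyle\bigwedge^2\CC^6 .
\]

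The crux is matching this with $\cB$. Passing to $\bar V=\CC^6/\CC v_0$ and $\bar\gamma_0,\bar\gamma_W\in\bigwedge^2\bar V$, where $\bar\gamma_W$ spans the line $\bigwedge^2\overline W$ with $\overline W=W/\CC v_0$, the criterion becomes $\bar\gamma_0\wedge\bar\gamma_0\in(\bigwedge^2\overline W)\wedge\bigwedge^2\bar V$, and one checks it is independent of the choice of $\beta_0$ in the pencil $A\cap F_{v_0}$. I would then read it off the pencil $\langle\bar\gamma_W,\bar\gamma_0\rangle$ of bivectors on $\bar V\cong\CC^5$. If $\bar\gamma_0$ is decomposable we are in case (1) (a second $W'\in\Theta_A$ through $[v_0]$). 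Otherwise $\bar\gamma_0$ has rank $4$, and the criterion first forces $\overline W$ into the symplectic support $U_4$ of $\bar\gamma_0$; inside $\bigwedge^2 U_4$ the decomposables form a single Plücker quadric, and the line through the point $\bar\gamma_W$ meets it either at a second point — a second decomposable bivector, i.e.\ case (1) — or tangentially, which happens precisely when $\bar\gamma_W\wedge\bar\gamma_0=0$, i.e.\ $\bar\gamma_0\in\overline W\wedge\bar V$, i.e.\ $\beta_0\in S_W$, i.e.\ case (2). This gives $\sing C_{W,A}\cap\{\dim(F_v\cap A)=2\}=\cB(W,A)\cap\{\dim(F_v\cap A)=2\}$, and combined with the adjugate remark it proves $\cB(W,A)\subseteq\sing C_{W,A}$ when $C_{W,A}\ne\PP(W)$. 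For the remaining implication $C_{W,A}=\PP(W)\Rightarrow\cB(W,A)=\PP(W)$ I would argue by contraposition on the dense locus of generic corank: if this corank is $1$ and some generic $[v_0]\notin\cB$, the functional above is nonzero, contradicting $\det\phi\equiv 0$; since $\cB$ is closed this forces $\cB=\PP(W)$.

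I expect the main obstacle to be twofold. The first is the corank-one identification — in particular recognizing that case (2) of $\cB$ is the second-order, Plücker-tangency degeneration rather than a first-order one, and that the support reduction to $U_4$ is exactly what makes the single-quadric argument available; this is the step most easily gotten wrong. The second is the genuinely degenerate situation in which $\dim(F_v\cap A)\ge 3$ for every $[v]\in\PP(W)$, so the corank is never $1$ and the first-order contraposition for $C_{W,A}=\PP(W)\Rightarrow\cB(W,A)=\PP(W)$ does not apply; here one must fall back on the global Lagrangian structure of $A$ (or the finer second-degeneracy analysis of the cited reference) to see that case (2) then holds identically.
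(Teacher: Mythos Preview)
The paper does not give a proof of this proposition: it is quoted verbatim as Corollary~3.2.7 of~\cite{ogtanto}, with no argument supplied here. So there is no in-paper proof to compare against; I can only assess your proposal on its own merits and against the surrounding material (notably \Ref{prp}{primisarto}, also imported from~\cite{ogtanto}).

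Your corank--one analysis is correct and pleasant. The differential of $\det\phi$ at a point with $\dim(A\cap F_{v_0})=2$ is exactly the functional $w\mapsto\vol(v_0\wedge w\wedge\gamma_0\wedge\gamma_0)$, which is the $\ov{k}=1$ case of \Ref{prp}{primisarto}; your reformulation $\bar\gamma_0\wedge\bar\gamma_0\in(\bigwedge^2\overline W)\wedge\bigwedge^2\bar V$ and the subsequent reading via the pencil on the Pl\"ucker quadric in $\PP(\bigwedge^2 U_4)$ cleanly identify the two degenerations with cases~(1) and~(2) of \Ref{dfn}{malvagio}. Together with the adjugate remark for $\dim(A\cap F_{v_0})\ge 3$, this does prove $\cB(W,A)\subset\sing C_{W,A}$ when $C_{W,A}\ne\PP(W)$, and the implication $\cB(W,A)=\PP(W)\Rightarrow C_{W,A}=\PP(W)$.

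The gap you flag is genuine. Your argument for $C_{W,A}=\PP(W)\Rightarrow\cB(W,A)=\PP(W)$ only works if the generic corank of $\phi$ on $\PP(W)$ is $1$: then the nonvanishing of the linear term at a point outside $\cB$ contradicts $\det\phi\equiv 0$. If instead $\dim(A\cap F_v)\ge 3$ for every $[v]\in\PP(W)$, no first-order information is available, and nothing you have written forces such points into $\cB(W,A)$. (It is not automatic: $F_v\cap S_W$ has codimension $3$ in $F_v$, so $\dim(A\cap F_v)\ge 3$ gives only $\dim(A\cap F_v\cap S_W)\ge 0$.) Closing this case really does require the finer analysis carried out in~\cite{ogtanto}: one uses the higher-order term $g_{\ov k}(w)=\mu\det(\psi^{v_0}_w|_{\ov K})$ of \Ref{prp}{primisarto} with $\ov k\ge 2$ and the geometry of $\Gr(2,V_0)_{W_0}$ from \Ref{clm}{roncisvalle} to force either a decomposable in $\PP(\ov K)$ (case~(1)) or a tangency producing case~(2). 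Your proposal correctly isolates this as the missing ingredient but does not supply it.
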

\begin{lmm}\label{lmm:unridotto}
Let $A\in\lagr$. Suppose that $\Theta_A$ is finite of cardinality at least $15$. Then there exists $W\in\Theta_A$ such that $C_{W,A}$ is a reduced curve.
\end{lmm}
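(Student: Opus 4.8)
The plan is to argue by contradiction: assume that for every $W\in\Theta_A$ the sextic $C_{W,A}$ is either all of $\PP(W)$ or non-reduced, and extract an incompatibility with $|\Theta_A|\ge 15$. I would open with a rigidity observation that uses only finiteness (not completeness) of $\Theta_A$: if two distinct $W,W'\in\Theta_A$ had $\dim(W\cap W')=2$, then every $3$-space $W''$ with $W\cap W'\subseteq W''\subseteq W+W'$ satisfies $\bigwedge^3 W''\in\la\bigwedge^3 W,\bigwedge^3 W'\ra\subseteq A$, producing a $\PP^1$ inside $\Theta_A$ against finiteness. Hence any two distinct members of $\Theta_A$ meet in a single point, and in particular \emph{no line lies in two distinct $\PP(W)$'s}. (This is the analogue of \Ref{crl}{unpunto}, here obtained directly from finiteness via \Ref{rmk}{incidiso}.)

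Next I would translate non-reducedness through the determinantal description of $C_{W,A}$. Writing $\phi_v$ for the rank-$9$ map whose degeneracy locus is $C_{W,A}$, one has $\dim(F_v\cap A)=1+\cork\phi_v$, so that $C_{W,A}=\{\det\phi_v=0\}$ and the locus where $\cork\phi_v\ge 2$ is $\PP(W)\cap Z$, where
\[
Z:=\{[v]\in\PP^5\mid \dim(F_v\cap A)\ge 3\}.
\]
Using the standard fact that along each component of a determinantal curve the determinant vanishes to the order prescribed by the generic corank there, $C_{W,A}$ is reduced as soon as no component of it has generic corank $\ge 2$, i.e. as soon as $\PP(W)\cap Z$ is finite. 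Thus it suffices to produce one $W\in\Theta_A$ with $\dim(\PP(W)\cap Z)\le 0$, and the contradiction hypothesis becomes: $\PP(W)\cap Z$ is a curve for every $W$. The degenerate case $C_{W,A}=\PP(W)$, equivalently $\cB(W,A)=\PP(W)$ by \Ref{prp}{cnesinerre}, forces (since its condition~(1) locus is finite) the condition $\dim(A\cap F_v\cap S_W)\ge 2$ to hold identically on $\PP(W)$; I would fold this into the same $Z$-analysis, or rule it out separately using \Ref{prp}{cnesinerre}.

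Now comes the count. If $\PP(W)\cap Z$ contains a curve, that curve lies in a one-dimensional irreducible component $Z_0$ of $Z$; when $\dim Z=1$ this forces $Z_0\subseteq\PP(W)$, and by the rigidity of the first paragraph such a $Z_0$ (a line, or a plane curve spanning $\PP(W)$) is contained in \emph{at most one} member $\PP(W)$, since two members containing it would meet along it rather than in a point. Hence the number of $W\in\Theta_A$ for which $C_{W,A}$ fails to be reduced is bounded by the number of one-dimensional components of $Z$ lying in a member plane; if this is $\le 14$, the hypothesis $|\Theta_A|\ge 15$ delivers a reduced $C_{W,A}$.

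The main obstacle is precisely this last numerical input: controlling $Z$ for a Lagrangian $A$ with finite $\Theta_A$. Concretely one must show $\dim Z\le 1$ (so that ``meets in a curve'' really means ``contains a component''), handle uniformly the $C_{W,A}=\PP(W)$ case, and bound by $14$ the number of one-dimensional components of $Z$ that can sit in a member plane. I expect this to require the determinantal/symplectic structure of the second degeneracy locus developed in \cite{ogtanto}, together again with the finiteness of $\Theta_A$, and it is here that the precise value $15$ must enter; the rest of the argument is essentially formal once this bound is in hand.
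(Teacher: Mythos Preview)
Your proposal is incomplete by your own admission: the last paragraph is a list of things you would still need to prove (that $\dim Z\le 1$, a uniform treatment of $C_{W,A}=\PP(W)$, and above all a bound $\le 14$ on the number of one-dimensional components of $Z$ lying in a member plane). None of these is supplied, and the third is exactly where the number $15$ has to bite. There is also a technical slip earlier: the assertion that ``the determinant vanishes to the order prescribed by the generic corank'' is not a general fact about determinantal loci (e.g.\ the $2\times 2$ map $\mathrm{diag}(x^2,1)$ has corank $1$ along $x=0$ but $\det=x^2$), so you cannot conclude that $C_{W,A}$ non-reduced forces $\PP(W)\cap Z$ to contain a curve without invoking the specific structure of the map, which you do not do.

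The paper's argument is quite different and avoids $Z$ altogether. From $C_{W,A}$ non-reduced (or $=\PP(W)$) one deduces, via \Ref{prp}{cnesinerre} and the single-point intersection property you also noted, that $\dim\cB(W,A)\ge 1$; since the condition~(1) part of $\cB(W,A)$ is finite, for generic $[v]\in\cB(W,A)$ there is $\alpha\in(A\cap F_v\cap S_W)\setminus\bigwedge^3 W$, and such an $\alpha$ determines $[v]$ uniquely. This forces $\dim(A\cap S_W)\ge 3$, i.e.\ $\PP(A)$ meets the embedded tangent space to $\Gr(2,\PP^5)$ at $\bigwedge^3 W$ in a linear space of dimension $\ge 2$. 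Now take a \emph{generic} $10$-dimensional linear space $\Omega\supset\PP(A)$ in $\PP(\bigwedge^3\CC^6)$: the intersection $\Omega\cap\Gr(2,\PP^5)$ is finite (since $\PP(A)\cap\Gr(2,\PP^5)=\Theta_A$ is finite), and at each $W\in\Theta_A$ the tangency gives $\mult_{\PP(W)}\Omega\cdot\Gr(2,\PP^5)\ge 3$. With $|\Theta_A|\ge 15$ this yields total intersection $\ge 45$, contradicting $\deg\Gr(2,\PP^5)=42$. This is where the threshold $15$ enters, and it replaces your missing component count by a single degree computation.
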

\begin{proof}
By contradiction. Assume that for every $W\in\Theta_A$  one of the following holds:
\begin{enumerate}
\item[(1)]
$C_{W,A}=\PP(W)$.
\item[(2)]
$C_{W,A}$ is a non-reduced curve.
\end{enumerate}
By~\Ref{prp}{cnesinerre} we get that $\dim\cB(W,A)\ge 1$. Let $W'\in(\Theta_A\setminus\{W\})$: since $\Theta_A$ is finite the planes $\PP(W)$ and $\PP(W')$ intersect in a single point, see~\Ref{crl}{unpunto}. It follows that for generic $[v]\in\cB(W,A)$ there exists 
\begin{equation}\label{rugby}
\alpha\in\left((A\cap F_{v}\cap S_W)\setminus\bigwedge^3 W\right). 
\end{equation}
Given such $\alpha$ there is a unique $[v]\in\PP(W)$ such that~\eqref{rugby} holds. In fact suppose the contrary: then $\alpha$ is a decomposable element whose support is a $W'\in(\Theta_A\setminus\{W\})$ intersecting $W$ in a $2$-dimensional subspace, that contradicts the hypothesis that $\Theta_A$ is finite (see above). 
Since  $\dim\cB(W,A)\ge 1$ it follows that 
\begin{equation}\label{football}
\dim(A\cap  S_W)\ge 3. 
\end{equation}
Thus $\PP(A)$ intersects the projective tangent space to $\Gr(2,\PP^5)$ (embedded by Pl\"ucker) at $\PP(W)$ in a linear space of dimension at least $2$. Now let $\Omega\subset\PP(\bigwedge^3\CC^6)$ be a generic $10$-dimensional projective space containing $\PP(A)$.  Notice that 
\begin{equation*}
\dim\Omega+\dim\Gr(2,\PP^5)=19=\dim\PP(\bigwedge^3\CC^6). 
\end{equation*}
The intersection $\Omega\cap\Gr(2,\PP^5)$ is finite because by hypothesis ${\bf \Theta}_A=\PP(A)\cap \Gr(2,\PP^5)$ is finite.
By~\eqref{football} we get that $\Omega$ intersects the projective tangent space to $\Gr(2,\PP^5)$  at $\PP(W)$ in a linear space of dimension at least $2$: thus 
\begin{equation}
\mult_{\PP(W)}\Omega\cdot \Gr(2,\PP^5)\ge 3.
\end{equation}
Since the cardinality of ${\bf \Theta}_A$ is at least $15$ we get that $\Omega\cdot \Gr(2,\PP^5)\ge 45$, that is a contradiction because   $\deg\Gr(2,\PP^5)=42$, see p.~247 of~\cite{joebook}.
\end{proof}
Now let $T$ be a finite complete family of pairwise incident planes in $\PP^5$. By~\Ref{clm}{incilagr} there exists $A\in\lagr$ such that ${\bf\Theta}_A=T$.  Suppose that $T$ has cardinality at least $15$: by~\Ref{lmm}{unridotto} there exists $W\in \Theta_A$ such that $C_{W,A}$ is a reduced sextic curves.   Let $W'\in(\Theta_A\setminus\{W\})$: by~\Ref{crl}{unpunto} the intersection $\PP(W)\cap \PP(W')$ is a point. By~\Ref{prp}{cnesinerre} the curve $C_{W,A}$ is singular at $\PP(W)\cap \PP(W')$. Thus we have a map
\begin{equation}\label{interseco}
\begin{matrix}
\Theta_A\setminus\{W\} & \overset{\varphi}{\lra} & \sing C_{W,A} \\
W' & \mapsto &  \PP(W')\cap \PP(W)
\end{matrix}
\end{equation}
There are at most $15$ singular points of $C_{W,A}$ (the maximum $15$ is achieved by sextics which are the union of $6$ generic lines): it follows that if $\varphi$ is injective then ${\bf \Theta}_A=T$ has at most $16$ elements. Since  $\varphi$ is not necessarily injective we will need to answer the following question: what is the relation between the cardinality of  $\varphi^{-1}(p)$ and the singularity of $C_{W,A}$ at $p$ ? First we will recall how to compute the initial terms in the Taylor expansion of a local equation of $C_{W,A}$ at a given point $[v_0]\in\PP(W)$ - here $A\in\lagr$ and $W\in\Theta_A$ are arbitrary.   Let $[w]\in\PP(W)$;   we let 
\begin{equation}
G_w:=F_w/\bigwedge^ 3 W.
\end{equation}
Let $W_0\subset W$  be a subspace complementary to $[v_0]$. We have an isomorphism
\begin{equation}\label{minnie}
\begin{matrix}
W_0 & \overset{\sim}{\lra} & \PP(W)\setminus\PP(W_0) \\
w & \mapsto & [v_0 +w]
\end{matrix}
\end{equation}
onto a neighborhood of $[v_0]$;
thus $0\in W_0$ is identified with $[v_0]$. We have
\begin{equation}\label{qumme}
C_{W,A}\cap W_0=V(g_0+g_1+\cdots+g_6),\qquad g_i\in \Sym^i W_0^{\vee}.
\end{equation}
Given  $w\in W$ we define the Pl\"ucker quadratic form $\psi^{v_0}_w$ on $G_{v_0}$ as follows. Let $\ov{\alpha}\in G_{v_0}$ be the equivalence class of $\alpha\in F_{v_0}$. Thus $\alpha=v_0\wedge\beta$ where $\beta\in\bigwedge^ 2 V$  is defined modulo $(\bigwedge^ 2 W+ [v_0]\wedge V)$: we let 
\begin{equation}\label{quadriwu}
\psi^{v_0}_w(\ov{\alpha}):=
\vol(v_0\wedge w\wedge\beta\wedge\beta).
\end{equation}
\begin{prp}[Prop.~3.1.2 of~\cite{ogtanto}]\label{prp:primisarto}
Keep notation and hypotheses as above. Let  $\ov{K}:=A \cap F_{v_0}/\bigwedge^3 W$ (notice that $\ov{K}\subset G_{v_0}$) and $\ov{k}:=\dim \ov{K}=\dim(A\cap F_{v_0})-1$.   Then the following hold:
\begin{itemize}
\item[(1)]
$g_i=0$ for $i<\ov{k}$.
\item[(2)]
 There exists $\mu\in\CC^{*}$ such that
\begin{equation}\label{marzamemi}
g_{\ov{k}}(w)=\mu\det(\psi^{v_0}_w|_{\ov{K}}),\quad w\in W_0.
\end{equation}
\end{itemize}
\end{prp}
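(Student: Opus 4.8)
The plan is to realize $C_{W,A}$ as the zero locus of the determinant of an explicit map of rank-$9$ bundles over $\PP(W)$, and then to read off the Taylor expansion~\eqref{qumme} from the standard leading-term formula for a determinant of a family of square matrices; the only step requiring genuine work is matching the leading form with $\psi^{v_0}_w$.

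First I would set up the determinantal description. Put $H:=(\bigwedge^3 W)^{\bot}/\bigwedge^3 W$; since $\bigwedge^3 W$ is a line which is isotropic (indeed $\bigwedge^3 W$ is decomposable, hence $(,)$-isotropic), $H$ is symplectic of dimension $18$, and for $[v]\in\PP(W)$ both $\ov{F}_v:=F_v/\bigwedge^3 W$ and $\ov{A}:=A/\bigwedge^3 W$ are Lagrangian subspaces of $H$ (note $\bigwedge^3 W\subset F_v$ because $v\in W$, and $F_v\subset(\bigwedge^3 W)^{\bot}$ for the same reason). Over $\PP(W)$ I form the bundle map
\[
\Phi_v\colon \ov{F}_v\lra (\bigwedge^3 W)^{\bot}/A,
\]
namely $F_v\hra(\bigwedge^3 W)^{\bot}\twoheadrightarrow(\bigwedge^3 W)^{\bot}/A$, which factors through $\ov{F}_v$ since $\bigwedge^3 W\subset A$. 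Both source and target have dimension $9$, and $\ker\Phi_v=(F_v\cap A)/\bigwedge^3 W$, so $C_{W,A}=V(\det\Phi_v)$; hence a local equation of $C_{W,A}$ near $[v_0]$, in the chart~\eqref{minnie}, is $\det\Phi_v$ up to a nowhere-vanishing factor. In particular $\ker\Phi_{v_0}=\ov{K}$ has dimension $\ov{k}$.

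Second I would invoke the Schur-complement expansion of $\det\Phi_v$. Choosing bases adapted to the splitting at $v_0$, the matrix of $\Phi_{v_0}$ has an invertible $(9-\ov{k})\times(9-\ov{k})$ block and a vanishing $\ov{k}\times\ov{k}$ block; the Schur complement then shows $\det\Phi_v$ vanishes to order at least $\ov{k}$, which is~(1), and that its degree-$\ov{k}$ homogeneous part equals, up to the nonzero value of the invertible block at $v_0$, the determinant of the intrinsic composite
\[
\ov{K}\hra\ov{F}_{v_0}\overset{\partial_w\Phi}{\lra}(\bigwedge^3 W)^{\bot}/A\twoheadrightarrow\coker\Phi_{v_0},
\]
where $\partial_w\Phi$ is the first-order variation of $\Phi_v$ along $v=v_0+w$. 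Finally the symplectic form on $H$ gives the canonical identification $\coker\Phi_{v_0}=H/(\ov{F}_{v_0}+\ov{A})\cong(\ov{F}_{v_0}\cap\ov{A})^{\vee}=\ov{K}^{\vee}$ (the standard duality between $L_1\cap L_2$ and $H/(L_1+L_2)$ for two Lagrangians), so this composite is a symmetric form $q_w$ on $\ov{K}$ and $g_{\ov{k}}(w)=\mu\det(q_w)$ for some $\mu\in\CC^{*}$. (For $\ov{k}=0$ this reads $g_0=\mu\neq 0$, consistent with $[v_0]\notin C_{W,A}$.)

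Third, I would carry out the one real computation: identifying $q_w$ with $\psi^{v_0}_w|_{\ov{K}}$. Given $\kappa\in\ov{K}$, lift it to $\alpha=v_0\wedge\beta\in A\cap F_{v_0}$; along $v=v_0+tw$ the vector $\alpha(t):=(v_0+tw)\wedge\beta$ is a local section of $\ov{F}_v$ with $\alpha(0)=\alpha$, so $\partial_w\Phi(\kappa)$ is represented by $\dot\alpha(0)=w\wedge\beta$, which lies in $(\bigwedge^3 W)^{\bot}$ because $w\in W$. Pairing its class in $\coker\Phi_{v_0}$ against a second class $\kappa'=v_0\wedge\beta'\in\ov{K}$ by the symplectic form yields
\[
\vol(w\wedge\beta\wedge v_0\wedge\beta')=\pm\vol(v_0\wedge w\wedge\beta\wedge\beta'),
\]
which is symmetric in $\beta,\beta'$; polarizing, the associated quadratic form sends $\kappa$ to $\vol(v_0\wedge w\wedge\beta\wedge\beta)=\psi^{v_0}_w(\kappa)$. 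This is precisely $\psi^{v_0}_w|_{\ov{K}}$, giving~\eqref{marzamemi}. The main obstacle I anticipate is the bookkeeping here: one must check that the class of $\partial_w\Phi(\kappa)$ in $\coker\Phi_{v_0}$ is independent of the chosen lift, i.e. stable under $\beta\mapsto\beta+v_0\wedge u+\gamma$ with $\gamma\in\bigwedge^2 W$. This is exactly the well-definedness of $\psi^{v_0}_w$ on $G_{v_0}$, and it holds because the ambiguous terms either acquire a repeated factor $v_0$ (landing in $F_{v_0}\subset\im\Phi_{v_0}$, hence killed in the cokernel) or lie in $\bigwedge^4 W=0$. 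The global constant $\mu$ requires no further analysis, since the statement only asserts $\mu\in\CC^{*}$.
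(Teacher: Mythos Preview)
The paper does not give its own proof of this proposition: it is quoted verbatim from~\cite{ogtanto} (Prop.~3.1.2 there), so there is no in-paper argument to compare against. What can be said is that your approach matches the framework the paper sketches just before the statement (the rank-$9$ degeneracy description of $C_{W,A}$) and carries it through correctly.

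A few remarks on your write-up. Your choice of codomain $(\bigwedge^3 W)^{\bot}/A$ is the right one for a rank-$9$ target; the paper's informal description of the codomain as $(\bigwedge^3 W)^{\bot}/\bigwedge^3 W$ appears to be a slip, since that quotient has dimension $18$. Your Schur-complement step is the standard one and is sound: with $\Phi_{v_0}$ in block form $\begin{pmatrix}0&0\\0&D_0\end{pmatrix}$ one has $\det\Phi_v=\det D(w)\cdot\det(A(w)-B(w)D(w)^{-1}C(w))$, and since $A,B,C$ all vanish at $w=0$ the Schur complement has leading part the linear term of $A$, which is exactly the intrinsic map $\ov{K}\to\coker\Phi_{v_0}$. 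The identification $\coker\Phi_{v_0}\cong\ov{K}^{\vee}$ via $(,)$ is correct, and your explicit computation of the pairing then both establishes symmetry and identifies the form with $\psi^{v_0}_w|_{\ov{K}}$ (up to the harmless sign absorbed into $\mu$). One small point: in your well-definedness check, the ambiguity $w\wedge\gamma$ with $\gamma\in\bigwedge^2 W$ lands in $\bigwedge^3 W\subset A$, hence already dies in $(\bigwedge^3 W)^{\bot}/A$; the vanishing you actually need for the \emph{pairing} is $v_0\wedge w\wedge\gamma\in\bigwedge^4 W=0$, which is what you wrote, so the argument is fine once one reads it at the level of the symplectic pairing rather than the cokernel class.
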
 
Next we will give a geometric interpretation of the right-hand side of~\eqref{marzamemi}.  Choose a  subspace $V_0\subset\CC^6$ complementary to $[v_0]$ and  such that 
 $V_0\cap W=W_0$.
Thus have  isomorphisms
\begin{equation}\label{canemaschio}
\begin{matrix}
\bigwedge^ 2 V_0 & \overset{\sim}{\lra} & F_{v_0} \\
\beta & \mapsto & v_0\wedge\beta
\end{matrix}
\end{equation}
and
\begin{equation}\label{giquoz}
\begin{matrix}
\bigwedge^ 2 V_0/\bigwedge^ 2 W_0 & \overset{\sim}{\lra} & G_{v_0} \\
\overline{\beta} & \mapsto & \overline{v_0\wedge\beta}.
\end{matrix}
\end{equation}
Let  $\psi^{v_0}_w$ be as in~\eqref{quadriwu}: we will view it as a quadratic form on $\bigwedge^ 2 V_0/\bigwedge^ 2 W_0$ via~\eqref{giquoz}. Let $V(\psi^{v_0}_w)\subset\PP(\bigwedge^ 2 V_0/\bigwedge^ 2 W_0)$ be the zero-locus of $\psi^{v_0}_w$. Let 
\begin{equation}\label{vaporub}
\wt{\rho}\colon\PP(\bigwedge^ 2 V_0)\dashrightarrow
\PP(\bigwedge^ 2 V_0/\bigwedge^ 2 W_0)
\end{equation}
be projection with center $\bigwedge^ 2 W_0$. 
Let  
\begin{equation}\label{prograss}
{\mathbb Gr}(2,V_0)_{W_0}:=
\wt{\rho}({\mathbb Gr}(2,V_0)).
\end{equation}
(The right-hand side is to be interpreted as the 
 closure of $\wt{\rho}({\mathbb Gr}(2,V_0)\setminus\{\bigwedge^ 2 W_0\})$.) Let $\rho$ be the restriction of $\wt{\rho}$ to ${\mathbb Gr}(2,V_0)$. The rational map
\begin{equation}\label{proraz}
\rho\colon{\mathbb Gr}(2,V_0)\dashrightarrow
{\mathbb Gr}(2,V_0)_{W_0}
\end{equation}
is birational because ${\mathbb Gr}(2,V_0)$ is cut out by quadrics. The following is an easy exercise, see Claim~3.5 of~\cite{ogtanto}.
\begin{clm}\label{clm:roncisvalle}
Keep notation as above. Then 
\begin{equation}\label{grascop}
\bigcap_{w\in W_0}V(\psi^{v_0}_w)={\mathbb Gr}(2,V_0)_{W_0}
\end{equation}
and the scheme-theoretic intersection on the left is reduced.
\end{clm}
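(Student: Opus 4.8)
The plan is to reduce everything to an explicit Plücker computation and then close the argument with a degree count. First I would choose a basis $e_1,\dots,e_5$ of $V_0$ with $W_0=\la e_1,e_2\ra$ and normalize $\vol$ so that $\vol(v_0\wedge e_1\wedge\cdots\wedge e_5)=1$. Writing a general $\beta\in\bigwedge^2 V_0$ as $\beta=\sum_{i<j}x_{ij}\,e_i\wedge e_j$, the $x_{ij}$ are Plücker coordinates on $\PP(\bigwedge^2 V_0)$, and the projection $\wt{\rho}$ simply drops the coordinate $x_{12}$ (since $\bigwedge^2 W_0=\la e_1\wedge e_2\ra$). A direct expansion of $\beta\wedge\beta$ then shows, for $w$ running over the basis $e_1,e_2$ of $W_0$, that $\psi^{v_0}_{e_1}$ and $\psi^{v_0}_{e_2}$ are, up to nonzero scalars, exactly the two quadrics
\[
x_{23}x_{45}-x_{24}x_{35}+x_{25}x_{34},\qquad x_{13}x_{45}-x_{14}x_{35}+x_{15}x_{34},
\]
namely the two Plücker relations of ${\mathbb Gr}(2,V_0)$ that do not involve $x_{12}$. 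Since $\psi^{v_0}_w$ is linear in $w$, this identifies $\bigcap_{w\in W_0}V(\psi^{v_0}_w)$ with the common zero locus of these two quadrics.

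One inclusion is then immediate: these are Plücker relations, hence vanish on every decomposable $\beta$, and therefore on $\wt{\rho}({\mathbb Gr}(2,V_0))={\mathbb Gr}(2,V_0)_{W_0}$; this gives ${\mathbb Gr}(2,V_0)_{W_0}\subseteq\bigcap_{w}V(\psi^{v_0}_w)$. For the reverse inclusion and for reducedness I would run a degree argument. The two quadrics have no common factor (each involves three variables the other omits), so they form a regular sequence; their common zero is a complete intersection of codimension $2$ in $\PP^8$, hence Cohen--Macaulay, pure of dimension $6$, and of degree $4$. On the other side, ${\mathbb Gr}(2,V_0)\subset\PP^9$ is irreducible of dimension $6$ and degree $5$, and $\rho$ is the birational projection from the point $[e_1\wedge e_2]$, a smooth point of the Grassmannian; hence $\deg{\mathbb Gr}(2,V_0)_{W_0}=5-1=4$.

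To conclude I would verify that the complete intersection is generically reduced along ${\mathbb Gr}(2,V_0)_{W_0}$: at a general point the two differentials are independent, since $d\psi^{v_0}_{e_1}$ has a nonzero $dx_{23}$-component while $d\psi^{v_0}_{e_2}$ has none, and symmetrically for $dx_{13}$, both controlled by $x_{45}\neq 0$. Thus the scheme is smooth there, so it is $R_0$; a complete intersection is $S_1$, and $R_0+S_1$ forces reducedness. A reduced, pure-dimensional scheme of dimension $6$ and degree $4$ that contains the irreducible $6$-dimensional, degree-$4$ variety ${\mathbb Gr}(2,V_0)_{W_0}$ must coincide with it. This yields simultaneously the scheme-theoretic equality \eqref{grascop} and the asserted reducedness.

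The computation itself is elementary, so the only genuine point of care is the bookkeeping that guarantees the two surviving quadrics cut out a complete intersection of the correct degree with no spurious extra component: the argument hinges on matching $\deg V(\psi^{v_0}_{e_1},\psi^{v_0}_{e_2})=4$ against $\deg{\mathbb Gr}(2,V_0)_{W_0}=4$ and on the generic-reducedness check, which together upgrade the easy set-theoretic containment to a scheme-theoretic identity.
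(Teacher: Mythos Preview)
Your argument is correct. The paper itself does not give a proof of this claim; it records it as an easy exercise and refers to Claim~3.5 of~\cite{ogtanto}, so there is no in-paper proof to compare against. Your explicit Pl\"ucker computation identifying $\psi^{v_0}_{e_1},\psi^{v_0}_{e_2}$ with the two Pl\"ucker quadrics of $\Gr(2,V_0)$ not involving $x_{12}$, together with the degree match $4=4$ and the smoothness check at a point with $x_{45}\ne 0$, does the job cleanly. One minor comment on presentation: the $R_0+S_1$ step, as phrased, presupposes reducedness at the generic point of \emph{every} component, whereas you only checked it along $\Gr(2,V_0)_{W_0}$; but this is harmless, since the degree count already forces $\Gr(2,V_0)_{W_0}$ to be the unique component (with multiplicity one), and then $S_1$ (no embedded points) gives the scheme-theoretic equality and reducedness simultaneously.
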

Let $A\in\lagr$ and suppose that $W\in\Theta_A$.  Let $p\in\PP(W)$. We let
\begin{equation}
n_p:=\#\{W'\in(\Theta_A\setminus\{W\}) \mid p\in\PP(W')\}.
\end{equation}
Notice that if $n_p>0$ then $p\in C_{W,A}$.
\begin{prp}\label{prp:cardmult}
Let $A\in\lagr$ and suppose that $\Theta_A$ is finite. Assume that $W\in\Theta_A$. Let $p\in\PP(W)$. 
\begin{enumerate}
\item
$n_p\le 4$.
\item
Assume in addition that $C_{W,A}$ is a curve. Then the following hold:
\begin{enumerate}
\item[(2a)] 
If $n_p=2$ then either $C_{W,A}$ has a cusp\footnote{By  cusp we mean a plane curve singularity with tangent cone which is quadratic of rank $1$.} at $p$ or else $\mult_p C_{W,A}\ge 3$.
\item[(2b)]
If $n_p=3$ or  $n_p=4$ then  $\mult_p C_{W,A}\ge 3$
\end{enumerate}
\end{enumerate}
\end{prp}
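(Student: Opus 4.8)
The plan is to translate everything into the local picture at the point $p=[v_0]\in\PP(W)$ that was set up just before~\Ref{prp}{primisarto}. Fix $V_0$ and $W_0$ as in~\eqref{canemaschio}, so that $\beta\mapsto v_0\wedge\beta$ identifies $\bigwedge^2 V_0$ with $F_{v_0}$, and let $\hat A\subset\bigwedge^2 V_0$ be the subspace corresponding to $A\cap F_{v_0}$. The first observation is that a plane $\PP(W')\in{\bf\Theta}_A$ passes through $p$ if and only if $v_0\in W'$, and for a $3$-dimensional $W'\ni v_0$ one has $\bigwedge^3 W'=v_0\wedge\beta'$ with $\beta'$ a decomposable generator of $\bigwedge^2(W'\cap V_0)$. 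Hence the members of $\Theta_A$ through $p$ correspond bijectively to the decomposable lines of $\hat A$, that is to the points of ${\mathbb Gr}(2,V_0)\cap\PP(\hat A)$, with $W$ itself corresponding to $\bigwedge^2 W_0$. Writing $W_1',\dots,W_{n_p}'$ for the members other than $W$, I obtain decomposables $\ov{\omega}_1,\dots,\ov{\omega}_{n_p}$ in $\ov K=(A\cap F_{v_0})/\bigwedge^3 W$, all lying on the projected Grassmannian ${\mathbb Gr}(2,V_0)_{W_0}$.

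For part~(1), note that ${\mathbb Gr}(2,V_0)\cap\PP(\hat A)$ is finite because $\Theta_A$ is finite. Since ${\mathbb Gr}(2,V_0)$ has dimension $6$ in $\PP(\bigwedge^2 V_0)=\PP^9$, a finite linear section forces $\dim\PP(\hat A)\le 3$; and a finite section of ${\mathbb Gr}(2,V_0)$ by a $\PP^3$ has length equal to $\deg{\mathbb Gr}(2,V_0)=5$ (the Grassmannian of lines in $\PP^4$ has degree $5$). Passing to a $\PP^3$ containing $\PP(\hat A)$ if necessary, I conclude that ${\mathbb Gr}(2,V_0)\cap\PP(\hat A)$ has at most $5$ points, and discarding the point corresponding to $W$ gives $n_p\le 4$.

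For part~(2) I would combine~\Ref{prp}{primisarto} with two elementary facts about the $\ov{\omega}_i$. First, each $\ov{\omega}_i$ is isotropic for every $\psi^{v_0}_w$: in~\eqref{quadriwu} the relevant $\beta$ is decomposable, so $\beta\wedge\beta=0$ and $\psi^{v_0}_w(\ov{\omega}_i)=0$ (equivalently $\ov{\omega}_i\in{\mathbb Gr}(2,V_0)_{W_0}$, cf.~\Ref{clm}{roncisvalle}). Second, distinct $\ov{\omega}_i,\ov{\omega}_j$ are non-proportional in $\ov K$: proportionality would make the three Pl\"ucker points $\bigwedge^2 W_0,\ \bigwedge^2(W_i'\cap V_0),\ \bigwedge^2(W_j'\cap V_0)$ collinear, forcing the line joining them to lie on ${\mathbb Gr}(2,V_0)$, hence the planes $W_0,\ W_i'\cap V_0,\ W_j'\cap V_0$ to share a common line in $V_0$ — contradicting that distinct members of the finite family $\Theta_A$ meet only at $p$ (cf.~\Ref{crl}{unpunto}). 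In particular $\ov{k}=\dim\ov K\ge 2$ as soon as $n_p\ge 2$, and by~\Ref{prp}{primisarto} one has $\mult_p C_{W,A}\ge\ov{k}$.

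It then remains to read off the tangent cone when $\ov{k}=2$, which by~\eqref{marzamemi} is $V(\det(\psi^{v_0}_w|_{\ov K}))$. Taking $\ov{\omega}_1,\ov{\omega}_2$ as a basis of $\ov K$, isotropy kills the diagonal of $\psi^{v_0}_w|_{\ov K}$, so its determinant equals $-b(w)^2$, where $b(w)$ is the off-diagonal entry (linear in $w$); hence $g_{\ov{k}}$ is either a nonzero perfect square, giving a cusp, or identically $0$, giving $\mult_p C_{W,A}\ge 3$. This already yields~(2a), the case $\ov{k}\ge 3$ being immediate from $\mult_p C_{W,A}\ge\ov{k}$. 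For~(2b), a third decomposable $\ov{\omega}_3=x\ov{\omega}_1+y\ov{\omega}_2$ (with $x,y\ne 0$ by non-proportionality) must also be isotropic, forcing $2xy\,b(w)=0$, i.e.~$b\equiv 0$, whence $g_{\ov{k}}=0$ and $\mult_p C_{W,A}\ge 3$; and again $\ov{k}\ge 3$ is immediate. I expect the main obstacle to be precisely the bookkeeping of this last paragraph: verifying that isotropy of the decomposables forces the off-diagonal perfect-square structure of $\det(\psi^{v_0}_w|_{\ov K})$, and that the non-proportionality dichotomy (via the ``line on the Grassmannian'' argument) is exactly what excludes an ordinary node and isolates the cusp.
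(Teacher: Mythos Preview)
Your argument is correct and follows essentially the same route as the paper: translate to $K=A\cap F_{v_0}\subset\bigwedge^2 V_0$, use that the points of $\Theta_A$ through $p$ are exactly $\PP(K)\cap\Gr(2,V_0)$, bound their number by $\deg\Gr(2,V_0)=5$, and then read off the tangent cone of $C_{W,A}$ from~\Ref{prp}{primisarto} using that the projected decomposables $\ov\omega_i$ are isotropic for all $\psi^{v_0}_w$.

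The one place where you diverge slightly is in~(2b). The paper shows that $n_p\ge 3$ forces $\dim K\ge 4$ (hence $\ov k\ge 3$) by a geometric contradiction: if $\dim\PP(K)=2$ the projected line $\PP(\ov K)$ would meet $\Gr(2,V_0)_{W_0}$, a reduced intersection of quadrics by~\Ref{clm}{roncisvalle}, in three distinct points, so would lie on it, and pulling back gives an infinite $\PP(K)\cap\Gr(2,V_0)$. You instead allow $\ov k=2$ and observe directly that a third isotropic vector $\ov\omega_3=x\ov\omega_1+y\ov\omega_2$ with $xy\ne 0$ kills the off-diagonal $b(w)$, whence $g_2\equiv 0$ and $\mult_p C_{W,A}\ge 3$. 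Your variant is a little more elementary (pure linear algebra, no appeal to the scheme-theoretic statement in~\Ref{clm}{roncisvalle}); the paper's variant gives the extra information that in fact $\ov k\ge 3$ whenever $n_p\ge 3$. Either way the conclusion is the same.
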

\begin{proof}
Throughout the proof we will let $p=[v_0]$. Let $K:=A\cap F_{v_0}$: we will view $K$ as a subspace of $\bigwedge^2 V_0$ via Isomorphism~\eqref{canemaschio}.  (1): Suppose that $n_p>4$. 
We claim that $\dim K\ge 4$. In fact suppose that $\dim K\le 3$ i.e.~$\dim\PP(K)\le 2$. Since $n_p\ge 5$ the intersection $\PP(K)\cap\Gr(2,V_0)$ contains at least $6$ points: that is absurd because $\Gr(2,V_0)$ is cut out by quadrics and the intersection $\PP(K)\cap\Gr(2,V_0)$ is finite (recall that $\Theta_A$ is finite by hypothesis). This proves that $\dim K\ge 4$. Since $\PP(K)\cap\Gr(2,V_0)$ is finite we get that $\dim\PP(K)\le 3$ and hence $\dim\PP(K)=3$. Since the degree of $\Gr(2,V_0)$ is $5$ and $\PP(K)\cap\Gr(2,V_0)$ contains at least $6$ points we get that $\PP(K)\cap\Gr(2,V_0)$ is infinite: that is a contradiction. (2a): If $\dim K\ge 4$ then  $\mult_p C_{W,A}\ge 3$ by Item~(1) of~\Ref{prp}{primisarto}. Suppose that $\dim K< 4$ i.e.~$\dim\PP(K)\le 2$. By hypothesis $\PP(K)\cap\Gr(2,V_0)$ is finite and contains $3$ points. Since $\Gr(2,V_0)$  is cut out by quadrics we get that $\dim\PP(K)=2$. Let $g_0,\ldots,g_6$ be as in~\eqref{qumme}. Then $0=g_0=g_1$ because $\dim K=3$ (see Item~(1) of~\Ref{prp}{primisarto}) and $g_2$ is given by~\eqref{marzamemi}. Let $\wt{\rho}$ be the projection of~\eqref{vaporub}. The closure of $\wt{\rho}(\PP(K)\setminus\bigwedge^2 W_0)$ is a line intersecting $\Gr(2,V_0)_{W_0}$ in two distinct points, namely the images under projection of the two points belonging to  $(\PP(K)\setminus\bigwedge^2 W_0)\cap \Gr(2,V_0)$. By~\eqref{qumme} and~\Ref{clm}{roncisvalle} we get that $g_2=l^2$ where $0\not=l\in W_0^{\vee}$: thus $C_{W,A}$ has a cusp at $p$. (2b): We will prove that $\dim K\ge 4$ - then $\mult_p C_{W,A}\ge 3$ will follow from Item~(1) of~\Ref{prp}{primisarto}. Assume that $\dim K< 4$. Suppose that $n_p=3$. Then $\PP(K)\cap \Gr(2,V_0)$ has cardinality $4$. Since $\Gr(2,V_0)$ is cut out by quadrics we get that $\dim\PP(K)=2$ and no three among  the points of  $\PP(K)\cap \Gr(2,V_0)$ are collinear.
 Now project $\PP(K)$ from $\bigwedge^2 W_0$ - see~\eqref{vaporub}: we get that $\wt{\rho}(\PP(K)\setminus\bigwedge^2 W_0)$ is a line intersecting $\Gr(2,V_0)_{W_0}$ in three distinct points, that contradicts~~\Ref{clm}{roncisvalle}. We have proved that   if $n_p=3$ then $\mult_p C_{W,A}\ge 3$. Lastly suppose that $n_p=4$. Then  $\PP(K)\cap \Gr(2,V_0)$ has cardinality $5$ and $\dim\PP(K)\le 2$: that is absurd because $\Gr(2,V_0)$ is cut out by quadrics.
\end{proof}
Now let $A\in\lagr$ and assume that $\Theta_A$ is finite of cardinality at least $15$. By~\Ref{lmm}{unridotto} there exists $W\in\Theta_A$ such that $C_{W,A}$ is a reduced curve. We let 
\begin{equation}\label{trota}
L_j:=\{p\in\PP(W) \mid n_p=j\},\qquad \ell_j:=\# L_j. 
\end{equation}
By~\Ref{prp}{cardmult} we have that $\ell_j=0$ for $j>4$ and hence 
\begin{equation}\label{numpiani}
\#\Theta_A\le 1+\ell_1+2\ell_2+3\ell_3+4\ell_4.
\end{equation}
\begin{lmm}\label{lmm:pasquetta}
Let $A\in\lagr$ and assume that $\Theta_A$ is finite of cardinality at least $15$.  Let $W\in\Theta_A$ be  such that $C_{W,A}$ is a reduced curve and keep notation as above.
Let  $s$ be the number of irreducible components of $C_{W,A}$.
Then 
\begin{equation}\label{stimasup}
\ell_1+\ell_2+3\ell_3+3\ell_4\le 9+s.
\end{equation}
\end{lmm}
\begin{proof}
Let $C:=C_{W,A}$ and $\mu\colon Z\to\PP^2$ be a series of blow-ups that desingularize $C$ i.e.~such that the strict transform $\wt{C}\subset Z$ is smooth. Then 
\begin{equation}\label{aggiunzione}
-2s\le 2(h^0(K_{\wt{C}})-h^1(K_{\wt{C}}))= 2\chi(K_{\wt{C}})=\wt{C}\cdot\wt{C}+\wt{C}\cdot K_Z.
\end{equation}
On the other hand let $p\in\PP(W)$: if $n_p\ge 1$  then $C$ is singular at $p$ and if $n_p\ge 3$ then the multiplicity of $C$ at $p$ is at least $3$, see~\Ref{prp}{cardmult}. 
It follows that
\begin{equation}\label{scoppio}
\wt{C}\cdot\wt{C}+\wt{C}\cdot K_Z\le (C\cdot C+C\cdot K_{\PP^2})-2(\ell_1+\ell_2)-6(\ell_3+\ell_4)=
18-2(\ell_1+\ell_2)-6(\ell_3+\ell_4).
\end{equation}
The proposition follows from~\eqref{aggiunzione} and~\eqref{scoppio}.
\end{proof}
The result below completes the proof of~\Ref{thm}{alpiuventi}.
\begin{prp}\label{prp:casopercaso}
Let $A\in\lagr$ and assume that $\Theta_A$ is finite.   Then
\begin{equation}
\#\Theta_A\le 20
\end{equation}
\end{prp}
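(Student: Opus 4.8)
The plan is to combine the two inequalities already at hand. First I would dispose of the easy case: if $\#\Theta_A\le 14$ the conclusion is immediate, and if $\Theta_A$ is empty or has at most one element there is nothing to prove. So assume $\#\Theta_A\ge 15$. Then by~\Ref{lmm}{unridotto} I may fix $W\in\Theta_A$ with $C:=C_{W,A}$ a reduced sextic curve, and set up the counts $\ell_1,\ell_2,\ell_3,\ell_4$ of~\eqref{trota} together with the bound~\eqref{numpiani}, namely
\begin{equation*}
\#\Theta_A\le 1+\ell_1+2\ell_2+3\ell_3+4\ell_4,
\end{equation*}
and the adjunction-type bound~\eqref{stimasup} from~\Ref{lmm}{pasquetta}, namely $\ell_1+\ell_2+3\ell_3+3\ell_4\le 9+s$, where $s$ is the number of irreducible components of $C$.

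The core of the argument will be a purely numerical optimization: I want to bound $1+\ell_1+2\ell_2+3\ell_3+4\ell_4$ from above subject to~\eqref{stimasup}. The natural move is to observe that the target coefficients $(1,2,3,4)$ are dominated termwise by a suitable multiple of the constraint coefficients $(1,1,3,3)$ wherever possible, but the mismatch in the $\ell_2$ and $\ell_4$ slots means I cannot simply rescale. Instead I expect to split off the contributions cleverly: writing the target as a combination of $(\ell_1+\ell_2+3\ell_3+3\ell_4)$ plus correction terms $\ell_2$, $\ell_4$ and the residual $\ell_3$ discrepancy, and then bounding those leftover terms separately. The fact that $s\le 6$ for a reduced sextic (it has at most six irreducible components, each of degree at least one) will convert~\eqref{stimasup} into $\ell_1+\ell_2+3\ell_3+3\ell_4\le 15$.

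The main obstacle, and the step requiring genuine care, is controlling the leftover $\ell_2$, $\ell_3$, $\ell_4$ contributions well enough to land exactly at $20$ rather than something larger. Here I would again invoke~\Ref{prp}{cardmult}: a point with $n_p\ge 3$ forces $\mult_p C\ge 3$, and a point with $n_p=2$ forces either a cusp or $\mult_p C\ge 3$. Such high-multiplicity or cuspidal points on a reduced plane sextic are few, and I expect the decisive bookkeeping to be a case analysis on $s$ (equivalently on how $C$ decomposes) combined with the genus/adjunction input already packaged in~\Ref{lmm}{pasquetta}. Concretely, from $3(\ell_1+\ell_2+3\ell_3+3\ell_4)\le 3(9+s)$ and the observation that the target $1+\ell_1+2\ell_2+3\ell_3+4\ell_4$ differs from a multiple of the left-hand side by terms one can bound using $s\le 6$ and the nonnegativity of the $\ell_j$, I anticipate arriving at $\#\Theta_A\le 20$, with the extremal configuration (six general lines, giving $s=6$ and $15$ nodes) pinpointing where the bound is sharp. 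I would finish by checking that the borderline arithmetic genuinely closes, since this is exactly the point where an off-by-one in the coefficient juggling would break the $20$.
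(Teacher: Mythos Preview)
Your setup is correct and matches the paper's: invoke \Ref{lmm}{unridotto} to get a reduced $C_{W,A}$, then use~\eqref{numpiani} and~\eqref{stimasup}. But the proof you sketch has a genuine gap: those two inequalities, together with $s\le 6$ and nonnegativity of the $\ell_j$, do \emph{not} yield $\#\Theta_A\le 20$. If you maximise $\ell_1+2\ell_2+3\ell_3+4\ell_4$ subject only to $\ell_1+\ell_2+3\ell_3+3\ell_4\le 9+s\le 15$, the optimum is to put everything into $\ell_2$ (coefficient ratio $2$), giving $2\cdot 15=30$ and hence only $\#\Theta_A\le 31$. Your proposed manipulation ``multiply~\eqref{stimasup} by $3$ and compare'' fares no better: $3(9+s)\le 45$, and the target is dominated by that with slack $2\ell_1+\ell_2+6\ell_3+5\ell_4\ge 0$, which is the wrong direction. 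So the purely numerical optimisation you describe cannot close at $20$; some additional geometric input is indispensable.

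Your intuition about the extremal case is also off. Six general lines have $15$ \emph{nodes}; by~\Ref{prp}{cardmult} such points satisfy only $n_p\ge 1$, so at best $\ell_1=15$ and~\eqref{numpiani} gives $\#\Theta_A\le 16$, not $20$. The paper's case analysis is not cosmetic bookkeeping but brings in exactly the missing constraints. For $s=1$ with $\ell_3+\ell_4=0$ it invokes Pl\"ucker's formula $2\ell_1+3\ell_2\le 27$ (class of an irreducible sextic is $\ge 3$), which is strictly stronger than what~\eqref{stimasup} gives and is what caps the cusp count. For $s=2$ one is forced into a specific configuration (an irreducible quintic with four cusps and two nodes, plus the line through the nodes). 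For $s\ge 3$ the curve is shown to be three conics $D_1+D_2+D_3$ in a pencil with reduced base locus $L_3\cup L_4$, and one gets $\#\Theta_A\le 17+\delta$ with $\delta$ the number of reducible $D_i$; the bound $20$ arises when all three conics are line-pairs (the complete quadrilateral), not from six lines in general position. You have correctly anticipated that a case split on $s$ is needed, but you will not be able to avoid this finer geometric input to finish.
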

\begin{proof}
We may assume that $\#\Theta_A>16$. By~\Ref{lmm}{unridotto} there exists $W\in\Theta_A$ such that $C_{W,A}$ is a reduced curve. Let  $L_j$ and $\ell_j$ be as in~\eqref{trota} and $s$ be the number of irreducible components of $C_{W,A}$.  We recall that $C_{W,A}$ is singular at each point of $L_1$, it has either a cusp or a point of multiplicity at least $3$ at each point of $L_2$ and it has multiplicity at least $3$ at each point of $L_3\cup L_4$, see~\Ref{prp}{cardmult}. 
By~\eqref{numpiani} we have that
\begin{equation}
16\le \ell_1+2\ell_2+3\ell_3+4\ell_4
\end{equation}
   The proof consists of a case-by-case analysis. Suppose that $s=1$.  
   Assume that $(\ell_3+\ell_4)=0$.  Applying Pl\"ucker's formulae to $C_{W,A}$ we get that $(2\ell_1+3\ell_2)\le 27$: it follows that  
   $\#\Theta_A\le 19$ (recall~\eqref{numpiani}). Assume that $(\ell_3+\ell_4)=1$. Then $(\ell_1+\ell_2)\le 7$ by~\Ref{lmm}{pasquetta}: it follows that $\#\Theta_A\le 19$.    If $(\ell_3+\ell_4)=2$ then $(\ell_1+\ell_2)\le 4$ by~\Ref{lmm}{pasquetta}: it follows that $\#\Theta_A=17$. Suppose that $s=2$.  A similar analysis shows that necessarily\footnote{Notice that if an irreducible plane quintic has $5$ cusps then it is smooth elsewhere (project the quintic form a hypothetical singular point distinct from the $5$ cusps: you will contradict Hurwitz' formula).}  $C_{W,A}=D+L$ where $D$  is an irreducible quintic with $4$ cusps (the points of $L_2$) and $2$ nodes (the points of $L_4$), $L$ is the line through the nodes of $D$: thus
   $\#\Theta_A= 17$. If $s\ge 3$ then $C_{W,A}=D_1+D_2+D_3$ where $D_1$, $D_2$ and $D_3$ are reduced conics (eventually reducible) belonging to the same pencil with reduced base locus (which is equal to $L_3\cup L_4$). We have $\#\Theta_A\le (17+\delta)$ where $\delta$ is the number of singular conics among $\{D_1,D_2,D_3\}$.
\end{proof}
\end{document}